\newtheorem{thm}{Theorem}[section]
\newtheorem{cor}[thm]{Corollary}
\newtheorem{prop}[thm]{Proposition}
\newtheorem{lem}[thm]{Lemma}
\theoremstyle{definition}
\newtheorem{defn}[thm]{Definition}
\newtheorem{exa}[thm]{Example}
\newtheorem{exas}[thm]{Examples}
\newtheorem{rem}[thm]{Remark}
\newcommand{\defeq}{\mathrel{\mathop:}=}
\newcommand{\B}{\mathbb B}
\newcommand{\mcG}{\mathcal G}
\newcommand{\mcGo}{\mathcal G^{(0)}}
\let\on\operatorname
\let\phi\varphi
\begin{document}

\title{On Steinberg algebras of Hausdorff ample groupoids over commutative semirings}

\author{Tran Giang Nam}
\address{Institute of Mathematics, VAST \\ 18 Hoang Quoc Viet, Cau Giay, Hanoi, Vietnam}
\email{tgnam@math.ac.vn}

\author{Jens Zumbr\"agel}
\address{Faculty of Computer Science and Mathematics \\ University of Passau, Germany}
\email{jens.zumbraegel@uni-passau.de}

\thanks{The first author was partially supported by FAPESP of Brazil Proc.\ 2018/06538-6 and by the Vietnam Academy of Science and Technology grant CT0000.02/20-21.}

\subjclass[2010]{Primary 16S99; Secondary 16Y60, 20L05, 22A22}

\begin{abstract} We investigate the algebra of a Hausdorff ample groupoid, introduced by Steinberg, over a commutative semiring~$S$. In particular, we obtain a complete characterization of congruence-simpleness for such Steinberg algebras, extending the well-known characterizations when~$S$ is a field or a commutative ring. We also provide a criterion for the Steinberg algebra $A_S(\mcG_E)$ of the graph groupoid~$\mcG_E$ associated to an arbitrary graph~$E$ to be congruence-simple. Motivated by a result of Clark and Sims, we show that the natural homomorphism from the Leavitt path algebra $L_{\B}(E)$ to the Steinberg algebra $A_{\B}(\mcG_E)$, where~$\B$ is the Boolean semifield, is an isomorphism if and only if~$E$ is row-finite. Moreover, we establish the Reduction Theorem and Uniqueness Theorems for Leavitt path algebras of row-finite graphs over the Boolean semifield~$\B$. \medskip

  \noindent \textbf{Keywords}: \'Etale groupoids; Ample groupoids; Congruence-simple semi\-rings; Steinberg algebras; Leavitt path algebras.
\end{abstract}

\maketitle

\section{Introduction}\label{sec:intro}

Steinberg algebras have been devised in \cite{s:agatdisa} in the context of discrete inverse semigroup algebras and independently in \cite{cfst:aggolpa} as a model for Leavitt path algebras.
They can be seen as discrete analogs of groupoid $C^*$-algebras, which were introduced earlier (see, \textit{e.g.}, \cite{r:agatca, p:gisatoa, e:isacca}).
The concept of a Steinberg algebra encompasses group algebras, inverse semigroup algebras and Leavitt path algebras. In recent years, there has been considerable interest around Steinberg algebras and in particular regarding their simpleness (see, \textit{e.g.}, \cite{bcfs:soaateg, ce:utfsa, s:spasoegawatisa, cepss:soaatnhg, n:soavess}).

Semirings have found their place in various branches of Mathematics, Computer Science, Physics, and other areas (see, for instance, \cite{g:sata}). There has been a substantial amount of interest in additively idempotent
semirings --- among which the Boolean semifield, tropical semifields, and coordinate semirings of tropical varieties represent prominent
examples --- originated in several emerging areas such as Tropical Geometry \cite{rst:fsitg, gg:eotv}, Tropical Algebra \cite{ir:sa},
$\mathbb F_1$-Geometry \cite{ccm:fw, cc:sofazf, cc:gotas}, the Geometry of Blueprints \cite{lor:tgob}, Cryptography \cite{mmr:pkcbosa},
Weighted automata \cite{dk:csafps}, Cluster algebras \cite{k:caadc}, Mathematical Physics \cite{l:tmdiatmavbi} and MV-algebras~\cite{dg:allatsr}.

In the development of structure theories for varieties of algebras, so-called congruence-simple algebras, \textit{i.e.}, algebras possessing only two trivial congruences -- the diagonal and universal
ones -- play a pivotal role as ``building blocks''.  In addition, some important applications of congruence-simple semirings include constructions of novel semigroup actions for a potential use in public-key cryptosystems (see, \textit{e.g.},
\cite{mmr:pkcbosa}). In this regard, a fundamental problem is therefore to classify congruence-simple semirings, in particular additively idempotent congruence-simple semirings.

Recently, there has been a number of works addressing this problem for certain special classes of semirings (see, \textit{e.g.}, \cite{mf:ccs, bhjk:scs, m:ofcss, bk:css, z:cofcsswz, kz:fsais, knz:ososacs, knz:solpawcias}). In particular, commutative congruence-simple semirings were completely classified -- they are exactly either fields or the Boolean semifield~$\B$ (see \cite{mf:ccs, bhjk:scs, bk:css}); finite congruence-simple semirings were classified in \cite{m:ofcss, z:cofcsswz, kz:fsais}; Katsov and the authors~\cite{knz:ososacs} described congruence-simple complete semirings, providing a method to construct additively idempotent congruence-simple infinite semirings by using the endomorphism semiring of semilattices; moreover, Katsov and the authors~\cite{knz:solpawcias} gave a criterion for the Leavitt path algebra of a row-finite graph over a commutative semiring to be congruence-simple, which forms a method to construct additively idempotent congruence-simple infinite semirings based on directed graphs. However, the classification of congruence-simple infinite semirings in general remains to be an important unresolved problem, on which the present paper aims to contribute.

Motivated by the constructions of~\cite{s:agatdisa} and~\cite{cfst:aggolpa}, we introduce and study the concept of Steinberg algebras of Hausdorff ample groupoids in a ``non-additive'' semiring setting, and investigate congruence-simpleness for these algebras. This semiring setup showcases interesting novel attributes of the Steinberg algebras. For example, contrary to the ring case, it turns out that an algebra of a finite inverse semigroup over a semiring is in fact not necessarily isomorphic to its associated Steinberg algebra. Also note that in our semiring setting, as opposed to the ``additive'' ring case, congruence-simpleness is not the same as ideal-simpleness, \textit{i.e.}, having only trivial ideals (see below or \cite[Ex.~3.8]{knz:ososacs}).

A main goal of this paper is to characterize congruence-simple Steinberg algebras of Hausdorff ample groupoids over a commutative ground semiring~$S$, extending the well-known characterizations when~$S$ is a field or a commutative unital ring (see \cite[Th.~4.1]{bcfs:soaateg}, \cite[Th.~4.1, Cor.~4.6]{ce:utfsa} and \cite[Th~3.5]{s:spasoegawatisa}).
Furthermore, we describe congruence-simple Steinberg algebras $A_S(\mcG_E)$ of graph groupoids $\mcG_E$ associated to graphs~$E$ over a commutative semiring $S$, and investigate the isomorphism problem between the Steinberg algebras $A_S(\mcG_E)$ and the Leavitt path algebras $L_S(E)$ when~$S$ is an additively idempotent commutative semiring. The new constructions of additively idempotent congruence-simple infinite semirings based on Hausdorff ample groupoids complement well the recent constructions of congruence-simple semirings that use the endomorphism semiring of semilattices and Leavitt path algebras of row-finite graphs with coefficients in the Boolean semifield~$\B$ mentioned above.

It should be emphasized that in the semiring setting we have to work on congruences which are different from ideals, and hence some different, novel techniques have to be applied in places. Namely, a key technique is first to reduce the problems to additively idempotent semirings and then use the natural order on additively idempotent semirings to address them. For example, Clark and Sims \cite[Ex.~3.2]{cs:eghmesa} constructed an isomorphism (called the \emph{natural isomorphism}) from the Leavitt path algebra $L_S(E)$ onto the Steinberg algebra $A_S(\mcG_E)$ when~$S$ is a commutative unital ring, by using Tomforde's Graded Uniqueness Theorem \cite[Th.~5.3]{t:lpawciacr} which is based on the theory of graded algebras and homogeneous ideals. In our semiring setting, however, concepts like homogeneous ideal and graded quotient algebra are not well-established, and so Clark and Sims's result is, in general, not true in the semiring setting --- in fact, it is only true when~$E$ is a row-finite graph if~$S$ is the Boolean semifield. We establish analogs of the Reduction Theorem \cite[Th.~2.2.11]{aas:lpa} and Tomforde's Uniqueness Theorem for the Leavitt path algebras $L_{\B}(E)$ of row-finite graphs $E$ over the Boolean semifield $\B$, using new proof techniques on congruences via the natural order.

The article is organized as follows. For the reader's convenience, all subsequently necessary basic
concepts and facts on semirings and Steinberg algebras over a commutative semiring are collected in Section~\ref{sec:basic}.
In Section~\ref{sec:simple}, we provide a complete description of congruence-simple Steinberg algebras of Hausdorff ample groupoids over a commutative semiring (Theorem~\ref{Nec-suffcondtheo}).
In Section~\ref{sec:graph}, we give a complete characterization of  congruence-simple Steinberg algebras $A_S(\mcG_E)$ of graph groupoids $\mcG_E$ associated to arbitrary graphs~$E$ over a commutative semiring~$S$ (Theorem~\ref{con-sim-gragrouSteinAlg}). Motivated by Clark and Sims's result \cite[Ex.~3.2]{cs:eghmesa}, we show that the natural homomorphism from  the Leavitt path algebra $L_{\B}(E)$ to the Steinberg algebra $A_{\B}(\mcG_E)$ is an isomorphism if and only if $E$ is row-finite (Theorem~\ref{LPAs are Steinberg-Alg}). In order to do so, we establish the Reduction Theorem (Lemma~\ref{graph-cong-reduction}) and Uniqueness Theorems (Corollaries~\ref{Uniqueness Thm}  and \ref{Cuntz-Krieger UniThm}) for Leavitt path algebras of row-finite graphs over $\B$.  Also, we show by example that the Leavitt path algebras $L_{\B}(E)$ is, in general, not isomorphic to the Steinberg algebra $A_{\B}(\mcG_E)$ (Example~\ref{LPAs are not Steinberg-Alg}). This provides us with examples of additively idempotent congruence-simple semirings by using graph groupoids, which are not isomorphic to the corresponding Leavitt path algebras (Remark~\ref{finalrem}).

\section{Basic concepts}\label{sec:basic}

\subsection{Preliminaries on semirings}

Recall~\cite{g:sata} that a \emph{hemiring} is an algebra $(S, +,
\cdot, 0)$ such that the following conditions are satisfied:
\begin{enumerate}
\item $(S, +, 0)$ is a commutative monoid with identity element~$0$;
\item $(S, \cdot)$ is a semigroup;
\item Multiplication distributes over addition from either side;
\item $0 s = 0 = s 0$ for all $s \in S$.
\end{enumerate}

A hemiring~$S$ is \emph{commutative} if $(S, \cdot)$ is a commutative
semigroup; and a hemiring $S$ is \emph{additively idempotent} if
$a + a = a$ for all $a \in S$.  Moreover, a hemiring~$S$ is a
\emph{semiring} if its multiplicative semigroup $(S, \cdot)$ actually
is a monoid $(S, \cdot, 1)$ with identity element~$1 \ne 0$.  A
commutative semiring~$S$ is a \emph{semifield} if $(S \!\setminus\!
\{ 0 \}, \cdot, 1)$ is a group.  Two well-known examples of semifields
are the additively idempotent two element semiring $\B \defeq
(\{ 0, 1\}, \vee, \wedge, 0, 1)$, the so-called \emph{Boolean semifield},
as well as the \emph{tropical semifield} $\mathbb T \defeq (\mathbb R
\cup \{ -\infty \}, \vee , +, -\infty, 0)$.

As usual, given two hemirings $S$ and $S'$, a map $\phi \colon S
\longrightarrow S'$ is a \emph{homomorphism} if $\phi(0) = 0$,
$\phi(x+y) = \phi(x) + \phi(y)$ and $\phi(x y) =
\phi(x) \phi(y)$ for all $x,y\in S$; and a submonoid~$I$ of
$(S,+,0)$ is an \emph{ideal} of a hemiring~$S$ if $sa$ and $as\in I$
for all $a \in I$ and $s \in S$; an equivalence relation $\rho$ on a
hemiring~$S$ is a \emph{congruence} if $(s\!+\!a,s\!+\!b)\in \rho$,
$(sa,sb)\in \rho $ and $(as, bs) \in \rho$ for all pairs $(a,b) \in
\rho $ and $s \in S$. On every hemiring~$S$ there are always the two
trivial congruences --- the \emph{diagonal congruence},
$\vartriangle_{_S} \,\defeq \{ (s, s) \mid s \in S \}$, and the
\emph{universal congruence}, $S^{2} \defeq \{ (a, b) \mid a, b
\in S \}$. Following~\cite{bhjk:scs}, a nonzero hemiring~$S$ is
\emph{congruence-simple} if $\vartriangle_{_S}$ and $S^{2}$ are the
only congruences on~$S$.

\begin{rem}\label{Cong-simRem}
A nonzero hemiring~$S$ is congruence-simple if and only if every nonzero hemiring homomorphism $\phi \colon S \longrightarrow S'$ is injective.
\end{rem}

\begin{proof}
($\Longrightarrow$). Assume that~$S$ is a congruence-simple hemiring and $\phi \colon S \longrightarrow S'$ is a nonzero homomorphism. We then have that the set \[ \ker(\phi) \defeq \{ (x, y) \in S^2 \mid \phi(x) = \phi(y) \} \] is a congruence on~$S$. Also, since~$\phi$ is nonzero, $\phi(x) \ne 0 = \phi(0)$ for some $x \in S$, and so $(x, 0) \notin \ker(\phi)$. From these observations, and since~$S$ is congruence-simple, we immediately obtain that $\ker(\phi) = \,\vartriangle_{_S}$. This implies that~$\phi$ is injective.

($\Longleftarrow$). Let~$\rho$ be a congruence on~$S$ which is different from the universal congruence. We then have the quotient semiring $S / \rho$ is nonzero, and the natural projection mapping $\pi \colon S \longrightarrow S/\rho$, defined by $\pi(s) = [s]$ for all $s \in S$, is a nonzero hemiring homomorphism. By our hypothesis, $\pi$ is injective, and so $\rho = \,\vartriangle_{_S}$. This implies that~$S$ is congruence-simple, finishing the proof. 
\end{proof}

We note that a ring~$R$ is congruence-simple if and only if~$\{ 0 \}$ and~$R$ are the only ideals of~$R$ (\textit{i.e.}, it is a \emph{simple} ring).  However, this is in general not true in a semiring setting.  For example, the tropical semifield~$\mathbb T$ has only the trivial ideals, but it has a proper congruence $\rho$ defined by $(x, y) \in \rho$ iff $x = y$ or $x + y \ne -\infty$, for $x, y \in \mathbb T$; that means, $\mathbb T$ is not congruence-simple.

An $S$-\emph{semimodule} over a given commutative semiring~$S$ is a
commutative monoid $(M,+,0_{M})$ together with a scalar multiplication
$(s,m) \mapsto sm$ from $S\times M$ to~$M$ which satisfies the
identities $(ss')m = s(s'm)$, $s(m+m') = sm+sm'$, $(s+s')m = sm+s'm$,
$1m=m$, $s0_{M} = 0_{M} = 0m$ for all $s,s'\in S$ and $m,m'\in
M$. \emph{Homomorphisms} between semimodules and \emph{free}
semimodules are defined in the standard manner.

By an $S$-algebra $A$ over a given commutative semiring~$S$ we mean an
$S$-semimodule~$A$ with an associative bilinear $S$-semimodule
multiplication ``\,$\cdot$\,'' on~$A$.  An $S$-algebra~$A$ is
\emph{unital} if $(A,\cdot)$ is actually a monoid with a neutral
element $1_{A}\in A$, \textit{i.e.}, $a1_{A}=a=1_{A}a$ for all $a\in
A$.  For example, every hemiring is an $\mathbb N$-algebra, where
$\mathbb N$ is the semiring of non-negative integers; and, of course,
every additively idempotent hemiring is a $\B$-algebra.
Homomorphisms between algebras over commutative semirings are defined
in the standard manner.

Let~$S$ be a commutative semiring and $\{x_i \mid i \in I\}$ a set of
independent, noncommuting indeterminates. Then $S \langle x_i \mid i \in
I \rangle$ will denote the free $S$-algebra generated by the
indeterminates $\{x_i \mid i\in I\}$, whose elements are polynomials
in the noncommuting variables $x_i$, $i \in I$, with coefficients
from~$S$ that commute with each variable.

Finally, let~$S$ be a commutative semiring and $(G, \cdot, 1)$ a
group.  Then we can form the \emph{group semiring} $S[G]$, whose
elements are formal sums $\sum_{g\in G}a_{g}g$ with
\emph{coefficients} $a_g \in S$ and finite support,
\textit{i.e.}, almost all $a_g = 0$. As usual, the operations of
addition and multiplication on $S[G]$ are defined as follows
\begin{gather*}
\sum_{g\in G} a_g g + \sum_{g\in G} b_g g = \sum_{g\in G} (a_g+b_g) g  \text{\,\, and\, \,}
(\sum_{g\in G} a_g g) (\sum_{h\in G} b_h h) = \sum_{t\in G} c_t t ,
\end{gather*}
where $c_t = \sum a_g b_h$, with summation over all $(g,h) \in G
\!\times\! G$ such that $g h = t$.  Clearly, the elements of $S \defeq
S\cdot 1$ commute with the elements of $G \defeq 1 \cdot G$ under the
multiplication in $S[G]$.  In particular, one may easily see that
$S[\mathbb Z] \cong S[x,x^{-1}]$, where $S[x,x^{-1}]$ is the algebra
of the \textit{Laurent polynomials} over~$S$.

\subsection{Steinberg algebras over commutative semirings}

In this subsection, we introduce the Steinberg algebra of a Hausdorff ample groupoid over an arbitrary commutative semiring. The construction of such an algebra is a natural generalization of the constructions of Steinberg algebras over commutative rings as introduced in \cite{s:agatdisa} in the context of discrete inverse semigroup algebras, and independently in~\cite{cfst:aggolpa} as a model for Leavitt path algebras.
All these constructions are crucially based on some general notions of groupoids that for the reader's convenience we reproduce here.

A \emph{groupoid} is a small category in which every morphism is invertible.  It can also be viewed as a generalization of a group which has a partial binary operation. 
Let~$\mcG$ be a groupoid.  If $\alpha \in \mcG$, $s(\alpha) = \alpha^{-1} \alpha$ is the \emph{source} of~$\alpha$ and $r(\alpha) = \alpha \alpha^{-1}$ is its \emph{range}.  The pair $(\alpha, \beta)$ is composable if and only if $r(\beta) = s(\alpha)$.  The set $\mcGo \defeq s(\mcG) = r(\mcG)$ is called the \textit{unit space} of $\mcG$.  Elements of $\mcGo$ are units in the sense that $\alpha s(\alpha) = \alpha$ and
$r(\alpha) \alpha = \alpha$ for all $\alpha \in \mcG$.  For $U, V \subseteq \mcG$, we define
\[ U V \defeq \{ \alpha \beta \mid \alpha \in U ,\, \beta \in V ,\,
  r(\beta) = s(\alpha) \} \quad \text{ and } \quad
  U^{-1} \defeq \{ \alpha^{-1} \mid \alpha \in U \} \,. \]

A \textit{topological groupoid} is a groupoid endowed with a topology under which the inverse map is continuous, and such that the composition is continuous with respect to the relative topology on $\mcG^{(2)} \defeq \{( \alpha, \beta ) \in \mcG^2 \mid r(\beta) = s(\alpha) \}$ inherited from $\mcG^2$. An \textit{\'etale groupoid} is a topological groupoid $\mcG,$ whose unit space  $\mcGo$ is locally compact Hausdorff, and such that 
 the domain map $s$ is a local homeomorphism. In this case, the range map $r$ and the multiplication map are local homeomorphisms and  $\mcGo$ is open in $\mcG$ \cite{r:egatq}. 

An \textit{open bisection} of $\mcG$ is an open subset $U\subseteq \mcG$ such that $s|_U$ and $r|_U$ are homeomorphisms onto an open subset of $\mcGo$.  

\begin{lem}[{\cite[Prop.~2.2.4]{p:gisatoa} and \cite[Lem.~2.1]{r:tgatlpa}}]\label{cobiseclem}
Let~$\mcG$ be an \'etale groupoid, and let $U$ and $V$ be compact open bisections of~$\mcG$. Then the following holds:
\begin{enumerate}[\quad \upshape (1)]
\item $U^{-1}$ and $UV$ are compact open bisections,
\item If $\mcG$ is Hausdorff, then $U\cap V$ is a compact open bisection.\end{enumerate}	
\end{lem}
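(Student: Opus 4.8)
The plan is to exploit three standard features of an étale groupoid $\mcG$: inversion $\alpha \mapsto \alpha^{-1}$ is a self-homeomorphism of $\mcG$; the maps $s$, $r$ and the multiplication $m$ are local homeomorphisms and therefore \emph{open}; and the restriction of a local homeomorphism to an open subset is again a local homeomorphism. Combined with the remark that a continuous open bijection is a homeomorphism, these let me verify each bisection claim by checking only (i) that the set in question is open and compact and (ii) that $s$ and $r$ are injective on it.

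For $U^{-1}$ I would argue directly. It is the image of the open compact set $U$ under the inversion homeomorphism, hence open and compact. Writing $\iota \colon U^{-1} \to U$ for inversion and using $s(\alpha^{-1}) = r(\alpha)$, $r(\alpha^{-1}) = s(\alpha)$, one has $s|_{U^{-1}} = (r|_U)\circ \iota$ and $r|_{U^{-1}} = (s|_U)\circ \iota$; as composites of homeomorphisms onto open subsets of $\mcGo$ these are again homeomorphisms onto open subsets, so $U^{-1}$ is a compact open bisection.

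The product $UV$ is the main case. I would first note that $\mcG^{(2)}$ is closed in $\mcG^2$, since it is the preimage of the (closed, as $\mcGo$ is Hausdorff) diagonal of $\mcGo \times \mcGo$ under the continuous map $(\alpha,\beta)\mapsto (s(\alpha),r(\beta))$. Hence $(U\times V)\cap\mcG^{(2)}$ is a closed subset of the compact set $U\times V$, so it is compact and $UV = m\big((U\times V)\cap\mcG^{(2)}\big)$ is compact. It is open because $(U\times V)\cap\mcG^{(2)}$ is open in $\mcG^{(2)}$ and $m$ is an open map. For the bisection property I would check injectivity of $s$ on $UV$ via $s(\alpha\beta) = s(\beta)$: if $\alpha_1\beta_1,\alpha_2\beta_2\in UV$ satisfy $s(\beta_1)=s(\beta_2)$, then $\beta_1=\beta_2$ since $s|_V$ is injective, whence $s(\alpha_1)=r(\beta_1)=r(\beta_2)=s(\alpha_2)$ gives $\alpha_1=\alpha_2$ by injectivity of $s|_U$; injectivity of $r$ is symmetric, using $r(\alpha\beta)=r(\alpha)$. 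Being an injective restriction of a local homeomorphism to the open set $UV$, the map $s|_{UV}$ is then a homeomorphism onto the open set $s(UV)$, and likewise $r|_{UV}$.

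Finally, for part (2), once $\mcG$ is Hausdorff the compact sets $U,V$ are closed, so $U\cap V$ is a closed subset of the compact set $U$ and hence compact, while being obviously open; and since $U\cap V$ is open in $U$, the map $s|_{U\cap V}$ is the restriction of the homeomorphism $s|_U$ to an open set, hence a homeomorphism onto the open subset $s(U\cap V)$ of $\mcGo$, and symmetrically for $r$. The only genuinely non-formal point is the treatment of $UV$: securing its compactness rests on $\mcG^{(2)}$ being closed --- which is exactly where Hausdorffness of the \emph{unit space} (not of $\mcG$) enters --- and the bisection property rests on the injectivity computation above. Everything else follows formally from the principle that local homeomorphisms are open.
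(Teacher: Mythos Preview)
Your argument is correct in every part: the treatment of $U^{-1}$ via the inversion homeomorphism, the compactness of $UV$ via closedness of $\mcG^{(2)}$ in $\mcG^2$ (using only Hausdorffness of $\mcGo$), openness of $UV$ via openness of multiplication, and the injectivity computations for $s|_{UV}$ and $r|_{UV}$ are all sound; likewise the handling of $U\cap V$ under the Hausdorff hypothesis is fine.

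Note, however, that the paper does not supply its own proof of this lemma --- it is quoted verbatim from \cite[Prop.~2.2.4]{p:gisatoa} and \cite[Lem.~2.1]{r:tgatlpa} without argument --- so there is nothing in the paper to compare your proof against. Your write-up is precisely the standard proof one finds in those references (and is fully consistent with the paper's stated framework, which records explicitly that $s$, $r$, and the multiplication are local homeomorphisms).
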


An \'etale groupoid~$\mcG$ is called \textit{ample} if~$\mcG$ has a base of compact open bisections for its topology.

Let~$\mcG$ be a Hausdorff ample groupoid, and~$S$ a commutative semiring with discrete topology.  We denote by $S^{\mcG}$ the set of all continuous functions from~$\mcG$ to~$S$.  Canonically, $S^{\mcG}$ has the structure of an $S$-semimodule with operations defined pointwise.
Notice that for any compact open bisection~$U$ of~$\mcG$,  the function $1_U \colon \mcG \longrightarrow S$, which denotes the characteristic function of~$U$, is continuous with compact support, i.e, $1_U \in S^{\mcG}$.

\begin{defn}
Let~$\mcG$ be a Hausdorff ample groupoid, and~$S$ a commutative semiring.   Let $A_S(\mcG)$ be the $S$-subsemimodule of $S^{\mcG}$ generated by the set
\[ \{ 1_U \mid U \text{ is a compact open bisection of } \mcG \} . \]
\end{defn}

\begin{lem}\label{expresslem}
Let $\mcG$ be a Hausdorff ample groupoid with a base~$\mathcal B$ of compact open bisections, and $S$ a commutative semiring.
\begin{enumerate}[\quad \upshape (1)]
\item Every $f \in A_S(\mcG)$ can be expressed as $f = \sum_{i=1}^n s_i 1_{U_i}$, where $s_i \in S \!\setminus\! \{ 0 \}$, and $U_1, \dots, U_n$ are mutually disjoint compact open bisections of~$\mcG$.
\item If~$S$ is an additively idempotent commutative semiring, then there holds $A_S(\mcG) = \{f \in S^{\mcG} \mid f \text{ has compact support} \} = \on{Span}_S \{ 1_B \mid B \in \mathcal B \}$.
\end{enumerate}
\end{lem}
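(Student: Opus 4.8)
The plan is to handle the two parts in turn, with part~(1) resting on a Boolean-algebra structure for compact open bisections. For part~(1) the essential preliminary observation is that, in a Hausdorff ample groupoid, the compact open bisections are closed not only under finite intersections --- which is Lemma~\ref{cobiseclem}(2) --- but also under relative complements. Indeed, for compact open bisections $U$ and $V$ I would write $U \setminus V = U \setminus (U \cap V)$; since $\mcG$ is Hausdorff the compact set $U \cap V$ is closed, so $U \setminus (U \cap V)$ is open, and being the intersection of the compact set $U$ with the closed set $\mcG \setminus (U \cap V)$ it is also compact, hence again a compact open bisection. Granting this, I would start from an arbitrary representation $f = \sum_{j=1}^m t_j 1_{V_j}$ with $V_j$ compact open bisections, and for each nonempty $T \subseteq \{1, \dots, m\}$ form the atom $W_T \defeq (\bigcap_{j \in T} V_j) \setminus (\bigcup_{j \notin T} V_j)$. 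By the closure properties each $W_T$ is a (possibly empty) compact open bisection, the $W_T$ are pairwise disjoint, and $V_j = \bigsqcup_{T \ni j} W_T$. Substituting and collecting coefficients yields $f = \sum_T s_T 1_{W_T}$ with $s_T = \sum_{j \in T} t_j$; discarding the empty atoms and the indices with $s_T = 0$ gives the required representation by mutually disjoint compact open bisections.

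For part~(2), write $C$ for the set of compactly supported continuous functions and $D \defeq \on{Span}_S \{ 1_B \mid B \in \mathcal B \}$. The inclusions $D \subseteq A_S(\mcG) \subseteq C$ are straightforward: each base bisection is a compact open bisection, while any $f \in A_S(\mcG)$ has support contained in a finite union of compact bisections, and since~$S$ is discrete this support is closed, hence a closed subset of a compact set and therefore compact. The substance is the reverse inclusion $C \subseteq D$. Given $f \in C$, discreteness of~$S$ makes every level set $f^{-1}(s)$ clopen, and the compact support is covered by the pairwise disjoint clopen level sets, of which only finitely many are nonempty; thus $f = \sum_i s_i 1_{L_i}$ with each $L_i = f^{-1}(s_i)$ compact open and $s_i \ne 0$. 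Since each $L_i$ is open and $\mathcal B$ is a base, I would cover $L_i$ by base bisections contained in it and pass, by compactness, to a finite subcover $L_i = B_1 \cup \dots \cup B_k$.

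The main obstacle --- and the point where the hypothesis enters decisively --- is that the level sets $L_i$ are compact open but in general \emph{not} bisections, so the identity $1_{L_i} = 1_{B_1} + \dots + 1_{B_k}$ cannot be read off from disjointness as in part~(1). Here additive idempotence is exactly what is needed: since $1 + \dots + 1 = 1$ in~$S$, the overlaps of the $B_\ell$ collapse and the right-hand side equals $1_{L_i}$ pointwise. This places each $1_{L_i}$, and hence $f$, in~$D$, closing the chain $D \subseteq A_S(\mcG) \subseteq C \subseteq D$ and thereby establishing both equalities.
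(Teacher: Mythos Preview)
Your proposal is correct and follows essentially the same approach as the paper. For part~(1) the paper uses the identical preliminary observation that $U \setminus V$ is again a compact open bisection, then proceeds by induction via the two-term identity $s\,1_U + r\,1_V = s\,1_{U \setminus V} + (s{+}r)\,1_{U \cap V} + r\,1_{V \setminus U}$, whereas you form all Boolean atoms $W_T$ at once; these are merely two presentations of the same refinement argument. For part~(2) your argument is effectively identical to the paper's: same chain of inclusions, same level-set decomposition (the paper phrases finiteness of the image via compactness in discrete~$S$ rather than via an open cover of the support, but this is cosmetic), same finite cover of each level set by base bisections, and the same use of additive idempotence to collapse overlaps.
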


\begin{proof}
(1) We first note that $U \setminus V$ is a compact open bisection for all compact open bisections~$U$ and~$V$ of~$\mcG$.  Indeed, by Lemma~\ref{cobiseclem}, $U \cap V$ is a compact open bisection of $\mcG$, and so $U \cap V$ is clopen in $\mcG$.  It implies that $U \cap V$ is clopen in~$U$.  Then $U \setminus V = U \setminus (U \cap V)$ is a clopen subset of~$U$, and hence $U \setminus V$ is a compact open bisection of~$\mcG$.  We also note that
\[ s 1_U + r 1_V = s 1_{U \setminus V} + (s + r) 1_{U \cap V} + r 1_{V \setminus U} \]
for all $s, r \in S$, and for all compact open bisections $U, V$ of~$\mcG$.  From these notes and by induction, we immediately obtain statement (1) of the lemma. \medskip

(2) It is obvious that 
\[ \on{Span}_S \{ 1_B \mid B \in \mathcal B \} \subseteq A_S(\mcG) \subseteq \{ f \in S^{\mcG} \mid f \text{ has compact support} \}. \]
Let $f \colon \mcG \longrightarrow S$ be a continuous function with compact support.  We then have that $f(\mcG) \setminus \{ 0 \}$
is contained in a compact subset of the discrete space~$S$, and
so it is finite.  Assume that $f(\mcG) \setminus \{ 0 \} = \{ s_1, \dots, s_n\}$.  Then, each $U_i = f^{-1}(s_i)$ is compact open in $\mcG$ and $f = s_1 1_{U_1} + \ldots + s_n 1_{U_n}$.  Since~$\mathcal B$ is a base of compact open bisections for the topology on~$\mcG$, for each $1 \le i \le n$, there exist elements $B^i_1, \dots, B^i_k \in \mathcal B$ such that $U_i = B^i_1 \cup \cdots \cup B^i_k$.  Since the semiring~$S$ is additively idempotent, we immediately obtain that
\[ 1_{U_i} = 1_{\bigcup^k_{j=1} B^i_j} = 1_{B^i_1} + \ldots + 1_{B^i_k} \in \on{Span}_S \{ 1_B \mid B \in \mathcal B \} , \] so $f \in \on{Span}_S \{ 1_B \mid B \in \mathcal B \}$.  It implies that $A_S(\mcG) = \{ f \in S^{\mcG} \mid f$ has compact support$\} = \on{Span}_S \{ 1_B \mid B \in \mathcal B \}$, thus finishing the proof.
\end{proof}

We now define the convolution product on the $S$-semimodule $A_S(\mcG)$ in order to make it an $S$-algebra.

\begin{defn}[{cf.~\cite[Def.~4.4]{s:agatdisa}}]
Let~$\mcG$ be a Hausdorff ample groupoid, and~$S$ a commutative semiring.   The \textit{multiplication} of $f, g \in A_S(\mcG)$ is given, for $\gamma \in \mcG$, by the convolution \[ (f \ast g) (\gamma) \,\defeq \!\sum_{\substack{r(\beta) = s(\alpha)\\ \gamma = \alpha \beta}}\! f(\alpha) g(\beta) \,. \]
\end{defn}

One must show that this sum is really finite and $f \ast g$ belongs to $A_S(\mcG)$, which is the content of the following proposition.

\begin{prop}[{cf.~\cite[Prop.~4.5, 4.6]{s:agatdisa}}]\label{convoprod}
Let~$\mcG$ be a Hausdorff ample groupoid, and~$S$ a commutative semiring.  Then the following is true:
\begin{enumerate}[\quad \upshape (1)]
\item $f\ast g \in A_S(\mcG)$ for all $f, g\in A_S(\mcG)$;
\item $1_U\ast 1_V = 1_{UV}$ for all compact open bisections $U, V$ of $\mcG$. In particular, if $U$ and $V$ are compact open subsets of $\mcGo$, then $1_U\ast 1_V = 1_{U\cap V}$;
\item For any compact open bisection $U$ of $\mcG$, $1_{U^{-1}}(\gamma) = 1_U(\gamma^{-1})$ for all $\gamma\in \mcG$;
\item $A_S(\mcG)$, equipped with the convolution, is an $S$-algebra.
\end{enumerate}
\end{prop}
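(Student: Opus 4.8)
The plan is to reduce every claim to a computation on the generating characteristic functions $1_U$ and then extend by the evident bilinearity of the convolution. Throughout I will use Lemma~\ref{expresslem}(1) to write any $f \in A_S(\mcG)$ as a finite sum $f = \sum_{i=1}^m s_i 1_{U_i}$ with $U_i$ compact open bisections; in particular every element of $A_S(\mcG)$ has compact support. It is natural to prove the statements in the order (2), (3), (1), (4), since the product formula in (2) is the computational heart on which the others rest.

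Before anything else I would check that the defining sum $(f\ast g)(\gamma) = \sum_{\gamma = \alpha\beta} f(\alpha)g(\beta)$ is finite. A nonzero summand needs $\alpha \in \on{supp}(f)$, and the decompositions $\gamma = \alpha\beta$ are parametrized by those $\alpha$ with $r(\alpha) = r(\gamma)$ (then $\beta = \alpha^{-1}\gamma$ is forced). Since $\on{supp}(f)$ lies in a finite union of bisections $U_i$ and $r|_{U_i}$ is injective, each $U_i$ contributes at most one admissible $\alpha$; hence there are only finitely many nonzero terms. This same bookkeeping shows that $\ast$ is $S$-bilinear: scalars pull out using commutativity of $S$, and distributivity of the finite sums is immediate from distributivity in $S$.

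For (2), I would fix $\gamma$ and observe that the summand $1_U(\alpha)1_V(\beta)$ is nonzero exactly when $\alpha \in U$, $\beta \in V$ and $\gamma = \alpha\beta$, that is, exactly when $\gamma \in UV$. The decisive point — and the place where the bisection hypothesis is used — is uniqueness: if $\gamma = \alpha\beta = \alpha'\beta'$ with $\alpha,\alpha' \in U$ and $\beta,\beta'\in V$, then $r(\alpha) = r(\gamma) = r(\alpha')$ forces $\alpha = \alpha'$ by injectivity of $r|_U$, whence $\beta = \alpha^{-1}\gamma = \beta'$. Thus the sum has at most one nonzero term, which equals $1$, giving $(1_U\ast 1_V)(\gamma) = 1_{UV}(\gamma)$. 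I would stress that this uniqueness is exactly what makes the identity hold over an arbitrary commutative semiring: no additions ever occur, so the absence of cancellation or of additive idempotency in $S$ is irrelevant. The stated special case for $U,V \subseteq \mcGo$ then follows because units compose only with themselves, so $UV = U\cap V$. Part (3) is immediate from the equivalence $\gamma \in U^{-1} \iff \gamma^{-1}\in U$.

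With (2) in hand, (1) follows by expanding $f = \sum_i s_i 1_{U_i}$, $g = \sum_j t_j 1_{V_j}$ and using bilinearity to get $f\ast g = \sum_{i,j} s_i t_j\, 1_{U_i V_j}$; each $U_i V_j$ is a compact open bisection by Lemma~\ref{cobiseclem}(1), so this finite sum lies in $A_S(\mcG)$. For (4), the semimodule and bilinearity parts are already settled, so it remains to prove associativity. Rather than wrestle with the triple convolution directly, I would again reduce to generators: by bilinearity it suffices to check $(1_U\ast 1_V)\ast 1_W = 1_U\ast(1_V\ast 1_W)$, and by (2) the two sides equal $1_{(UV)W}$ and $1_{U(VW)}$ respectively, which coincide since the subset product in a groupoid is associative. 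The main obstacle is thus concentrated in (2) — specifically the uniqueness-of-factorization argument — after which associativity becomes a one-line reduction; the only other care needed is the routine but indispensable verification that the convolution sum is finite and bilinear.
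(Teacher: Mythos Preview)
Your proof is correct and follows essentially the same approach as the paper: the paper defers items (1)--(3) to \cite[Prop.~4.5]{s:agatdisa} and then says associativity in (4) follows directly from (2), which is exactly the route you take, only with the details spelled out. Your uniqueness-of-factorization argument for (2) and the reduction of associativity to $(UV)W = U(VW)$ are the standard computations the cited references contain.
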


\begin{proof}
Items (1) to (3) are proved similarly as in the proof of \cite[Prop.~4.5]{s:agatdisa}. For item (4), it is sufficient to show the associativity of convolution. However, this is a straightforward by using item (2) (or the reader can refer to the proof of \cite[Prop.~2.4]{r:tgatlpa}), finishing the proof. 
\end{proof}

\begin{defn}
Let~$\mcG$ be a Hausdorff ample groupoid, and~$S$ a commutative semiring.  We call the $S$-algebra $A_S(\mcG)$ the \textit{Steinberg algebra} of~$\mcG$ over~$S$.
\end{defn} 

The following examples illustrate that some well-known algebras can be viewed as Steinberg algebras as well.

\begin{exas}
(1) Let $S$ be a commutative semiring, and $G$ a group. Define a small category $\mcG$ with one object $e$ (the identity of $G$) and $\text{Hom}_{\mcG}(e, e) = G$, where the composition of morphisms is simply the group multiplication. Then~$\mcG$ is obviously a Hausdorff ample groupoid with respect to the discrete topology, and it has a base of compact open bisections which are the singletons $\{g\}$.  The algebra $A_S(\mcG)$ is isomorphic to the group semiring $S[G]$ by the map $1_{\{g\}} \longmapsto g$. \medskip

(2) Let~$S$ be a commutative semiring and $X = \{ x_1, \dots, x_n \}$ a finite set.  Then, $\mcG \defeq X \times X$ is a groupoid with the composition and inverse defined, respectively, by $(x, y) (y, z) = (x, z)$ and $(x, y)^{-1} = (y, x)$.  Furthermore, $\mcG$ is a Hausdorff ample groupoid with respect to the discrete topology, and it has a base of compact open bisections which are the singletons $\{(x_i, x_j)\}$. In this example, $A_S(\mcG)$ is isomorphic to the $n \times n$ matrix semiring $M_n(S)$ by the map $1_{\{(x_i, x_j)\}}\longmapsto E_{ij}$, where $\{E_{ij} \mid 1 \le i,j \le n\}$ are the matrix units in $M_n(S)$. \medskip

$(3)$ Let $S$ be a commutative semiring and $\mcG$ a discrete groupoid. It is not hard to see that
\[ 1_{\{g\}} \ast 1_{\{h\}} = \begin{cases}
    1_{\{gh\}} & \text{if } r(h) =  s(g) , \\
    0 & \text{otherwise}, \end{cases} \]
for all $g, h\in \mcG$. Then $A_S(\mcG)$ is exactly the $S$-algebra having basis $\mcG$ and whose product extends that of $\mcG$ where we interpret undefined products as $0$.
\end{exas}

\begin{rem}
Let~$\mcG$ be a Hausdorff ample groupoid and let~$S$ be an additively idempotent commutative semiring.
  
(1) For all compact open subsets~$U$, $V$ of~$\mcG$ there holds \[ 1_U * 1_V = 1_{U V} \,, \] extending Proposition~\ref{convoprod} (2).
Indeed, by additive idempotency, for $\gamma \in \mcG$ the value $(1_U * 1_V)(\gamma) = \sum_{\gamma = \alpha \beta} 1_U(\alpha) 1_V(\beta)$ equals~$1$ iff there exists $\alpha \in U$ and $\beta \in V$ with $r(\beta) = s(\alpha)$ and $\alpha \beta = \gamma$, \text{i.e.}, iff $\gamma \in U V$.

(2) If moreover the semiring~$S$ is the Boolean semifield~$\B$, there is a natural bijective correspondence between the Steinberg algebra $A_{\B}(\mcG)$ and the collection of all compact open subsets of~$\mcG$, given by $1_U \mapsto U$ where $U \subseteq \mcG$ is a compact open subset, cf.\ Lemma~\ref{expresslem} (2).
Under this bijection the Steinberg algebra operations correspond to the set operations \[ U + V \defeq U \cup V \,, \qquad U * V \defeq U V \] for any compact open subsets~$U$, $V$ of~$\mcG$.
\end{rem}

As usual, for a hemiring~$S$ a \emph{set of local units} is a set $F \subseteq S$ of idempotents in~$S$ such that, for every finite subset $\{s_1, \dots, s_n\} \subseteq S$, there exists an element $f \in F$ with $f s_i = s_i = s_i f$ for all $i = 1, \dots, n$.  Using Proposition~\ref{convoprod} and repeating verbatim the proofs of \cite[Prop.~4.11]{s:agatdisa} and \cite[Lem.~2.6]{cep:agutatgisosa}, one obtains the following useful fact.

\begin{prop}\label{unitalSteinAlg}
Let~$\mcG$ be a Hausdorff ample groupoid, and~$S$ a commutative semiring. Then the following holds:
\begin{enumerate}[\quad \upshape (1)]
\item $(${cf.~\cite[Prop.~4.11]{s:agatdisa}}$)$ The algebra $A_S(\mcG)$ is unital if and only if $\mcGo$ is compact; in this case, the identity element is $1 = 1_{\mcGo}$.
\item $(${cf.~\cite[Lem.~2.6]{cep:agutatgisosa}}$)$ A set of local units of $A_S(\mcG)$ is given by $\{1_U \mid U$ is a compact open subset of~$\mcGo\}$.
\end{enumerate}
\end{prop}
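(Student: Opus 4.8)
The plan is to extract from Proposition~\ref{convoprod}\,(2) a pair of one-sided \emph{absorption identities} and build everything on top of them. For a compact open bisection $V$ of $\mcG$, both $r(V)$ and $s(V)$ are compact open subsets of $\mcGo$, and a direct check using that elements of $\mcGo$ act as partial units (i.e.\ $r(\alpha)\alpha = \alpha = \alpha s(\alpha)$) shows $r(V)\,V = V$ and $V\,s(V) = V$; via Proposition~\ref{convoprod}\,(2) these read $1_{r(V)} \ast 1_V = 1_V$ and $1_V \ast 1_{s(V)} = 1_V$. More generally, if $U \subseteq \mcGo$ is any compact open subset with $r(V) \subseteq U$, then $UV = V$, whence $1_U \ast 1_V = 1_V$, and symmetrically $1_V \ast 1_U = 1_V$ when $s(V) \subseteq U$. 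These identities, together with bilinearity of the convolution, are the only computational inputs needed for statement (2) and for the easy direction of (1).

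For statement (2), I would first note that each $1_U$ with $U$ a compact open subset of $\mcGo$ is idempotent, since $1_U \ast 1_U = 1_{U \cap U} = 1_U$ by Proposition~\ref{convoprod}\,(2). Given a finite set $\{f_1, \dots, f_n\} \subseteq A_S(\mcG)$, I would expand each $f_i = \sum_j s_{ij} 1_{V_{ij}}$ with the $V_{ij}$ compact open bisections (Lemma~\ref{expresslem}\,(1)) and set $U \defeq \bigcup_{i,j} \bigl( r(V_{ij}) \cup s(V_{ij}) \bigr)$, a finite union of compact open subsets of $\mcGo$, hence itself compact open in $\mcGo$. The absorption identities give $1_U \ast 1_{V_{ij}} = 1_{V_{ij}} = 1_{V_{ij}} \ast 1_U$ for every $i,j$, and by bilinearity $1_U \ast f_i = f_i = f_i \ast 1_U$ for all $i$; this is exactly the local-unit property. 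The ``if'' direction of (1) is then immediate: if $\mcGo$ is compact it is itself a compact open bisection (open because $\mcG$ is étale, and $s|_{\mcGo}, r|_{\mcGo}$ are identities), so $1_{\mcGo} \in A_S(\mcG)$, and taking $U = \mcGo$ above shows $1_{\mcGo}$ absorbs every element, i.e.\ $1_{\mcGo}$ is a two-sided identity.

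The substantive part is the converse in (1): assuming $A_S(\mcG)$ has an identity $e$, I must deduce that $\mcGo$ is compact. Here I would write $e = \sum_{i=1}^n s_i 1_{U_i}$ via Lemma~\ref{expresslem}\,(1); since $e$ is continuous into the discrete semiring $S$, its support $\on{supp}(e) = e^{-1}(S \!\setminus\! \{0\})$ is clopen and contained in the compact set $\bigcup_i U_i$, hence compact. The key step is to show $\mcGo \subseteq \on{supp}(e)$, which I would do pointwise: for $x \in \mcGo$ pick a compact open bisection $U \subseteq \mcGo$ with $x \in U$ (possible since $\mcGo$ is open and $\mcG$ is ample) and evaluate the identity $e \ast 1_U = 1_U$ at $x$. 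In the convolution sum $\sum_{x = \alpha\beta} e(\alpha) 1_U(\beta)$, the constraint $\beta \in U \subseteq \mcGo$ forces $\beta$ to be a unit, and composability then forces $\alpha = \beta = x$; hence $e(x) = e(x) 1_U(x) = (e \ast 1_U)(x) = 1_U(x) = 1$. Thus $e \equiv 1$ on $\mcGo$, so $\mcGo \subseteq \on{supp}(e)$.

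It remains to pass from ``$\mcGo$ lies in a compact set'' to ``$\mcGo$ is compact,'' and this is where the Hausdorff hypothesis enters decisively --- I expect it to be the main obstacle, or at least the point most easily overlooked. The missing ingredient is that in a \emph{Hausdorff} étale groupoid $\mcGo$ is not merely open but also \emph{closed}: it is the equalizer $\{ \gamma \in \mcG \mid \gamma = s(\gamma) \}$ of the two continuous maps $\mathrm{id}$ and $s \colon \mcG \to \mcG$ into the Hausdorff space $\mcG$, hence closed. Being a closed subset of $\mcG$ contained in the compact set $\on{supp}(e)$, the unit space $\mcGo$ is therefore compact. Finally, $e = 1_{\mcGo}$ follows from the uniqueness of identities together with the already-proved ``if'' direction, which identifies the identity element as claimed.
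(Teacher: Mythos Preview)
Your argument is correct and complete. The paper itself does not give a proof of this proposition at all: it simply states that the result follows by repeating verbatim the proofs of \cite[Prop.~4.11]{s:agatdisa} and \cite[Lem.~2.6]{cep:agutatgisosa} after invoking Proposition~\ref{convoprod}. Your write-up is precisely a fleshed-out version of that standard argument --- the absorption identities $1_U \ast 1_V = 1_V$ and $1_V \ast 1_U = 1_V$ derived from Proposition~\ref{convoprod}\,(2), the local-unit construction via Lemma~\ref{expresslem}\,(1), and for the converse of (1) the pointwise evaluation showing $e|_{\mcGo} \equiv 1$, followed by the observation that $\mcGo$ is closed in the Hausdorff groupoid~$\mcG$ and hence compact as a closed subset of $\on{supp}(e)$. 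There is nothing to correct and no genuine divergence in method to report; you have supplied the details the paper elects to omit.
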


We next present the universal property of Steinberg algebras over commutative semirings, whose proof is completely analogous to the one in \cite[Th.~3.10]{cfst:aggolpa} and which, for the reader's convenience, we provide here.

\begin{thm}[{cf. \cite[Th.~3.10]{cfst:aggolpa}}]\label{univproperty}
Let $\mcG$ be a Hausdorff ample groupoid, and $S$ a commutative semiring. Let $B$ be an $S$-algebra containing a family of elements $\{t_U\mid U$ is a compact open bisection of $\mcG\}$ satisfying:
\begin{enumerate}[\quad \upshape (1)]
\item $t_{\varnothing} = 0$;
\item $t_U t_V = t_{U V}$ for all compact open bisections~$U$ and~$V$; and
\item $t_U + t_V = t_{U \cup V}$ whenever~$U$ and~$V$ are disjoint compact open bisections such that $U \cup V$ is a bisection.
\end{enumerate}
Then, there is a unique $S$-algebra homomorphism $\pi \colon A_S(\mcG) \longrightarrow B$ such that $\pi(1_U) = t_U$ for all compact open bisections~$U$.
\end{thm}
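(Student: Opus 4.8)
The plan is to dispose of uniqueness immediately and then to construct $\pi$ from the disjoint normal form of Lemma~\ref{expresslem}(1), where the only genuine difficulty is well-definedness. For \emph{uniqueness}, I note that $A_S(\mcG) = \on{Span}_S\{1_U \mid U \text{ a compact open bisection}\}$, so any two $S$-algebra homomorphisms agreeing on all generators $1_U$ coincide; hence at most one $\pi$ with $\pi(1_U) = t_U$ can exist.

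The key preparatory step for \emph{existence} is a common-refinement device. Given finitely many compact open bisections $U_1, \dots, U_n$, I would form the nonempty ``atoms'' $\bigcap_{i \in I} U_i \setminus \bigcup_{i \notin I} U_i$ for $\varnothing \ne I \subseteq \{1, \dots, n\}$: these are mutually disjoint compact open bisections, since finite intersections and set-differences of compact open bisections are again such (Lemma~\ref{cobiseclem}(2), together with the fact that $U \setminus V$ is a compact open bisection established in the proof of Lemma~\ref{expresslem}(1)), and each $U_i$ is the disjoint union of the atoms it contains. In parallel I would upgrade hypothesis~(3) by induction to the statement that $t_W = t_{W_1} + \dots + t_{W_m}$ whenever a compact open bisection $W$ is a finite disjoint union $W = W_1 \sqcup \dots \sqcup W_m$ of compact open bisections (each partial union is clopen in $W$, hence a bisection, so (3) applies repeatedly). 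I then define $\pi(f) \defeq \sum_i s_i t_{U_i}$ for $f$ written in the disjoint normal form $f = \sum_i s_i 1_{U_i}$ of Lemma~\ref{expresslem}(1).

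The main obstacle is that this normal form is \emph{not} unique, and --- unlike the ring case, where one factors through the kernel of a map out of a free module --- in the semiring setting there is no such kernel to quotient by. To show $\pi$ is well-defined I would argue directly: given two normal forms $\sum_i s_i 1_{U_i} = \sum_j r_j 1_{V_j}$ of the same $f$, I refine the combined family $\{U_i\} \cup \{V_j\}$ into atoms $W_k$. Since $\bigcup_i U_i = \bigcup_j V_j$ equals the support of $f$, each atom lies in exactly one $U_i$ and exactly one $V_j$, so $f$ is constant on $W_k$ with value $c_k = f|_{W_k}$ depending only on $f$. Applying the upgraded form of (3) to each $U_i = \bigsqcup_{W_k \subseteq U_i} W_k$ then gives $\sum_i s_i t_{U_i} = \sum_k c_k t_{W_k} = \sum_j r_j t_{V_j}$, which is the desired equality; this is exactly where the common-refinement device resolves the non-uniqueness cleanly.

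Once $\pi$ is well-defined, checking that it is an $S$-algebra homomorphism is routine. I would first record the single-term formula $\pi(c\,1_W) = c\, t_W$ for every compact open bisection $W$ and $c \in S$ (both sides vanishing when $c = 0$, using $t_\varnothing = 0$ when $W = \varnothing$). Additivity and $S$-linearity then follow by expressing $f$, $g$, and $f+g$ over a common refinement $\{W_k\}$, reducing each identity to pointwise arithmetic on the atoms, with the relations $0 \cdot t_{W_k} = 0$ absorbing atoms where a coefficient vanishes. For multiplicativity I would expand $f * g = \sum_{i,j} s_i r_j 1_{U_i V_j}$ using $1_{U_i} * 1_{V_j} = 1_{U_i V_j}$ from Proposition~\ref{convoprod}(2), and then apply additivity, the single-term formula, and hypothesis~(2) $t_{U_i} t_{V_j} = t_{U_i V_j}$ to obtain $\pi(f*g) = \big(\sum_i s_i t_{U_i}\big)\big(\sum_j r_j t_{V_j}\big) = \pi(f)\,\pi(g)$. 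Finally, $\pi(1_U) = t_U$ is a special case of the single-term formula, completing the construction.
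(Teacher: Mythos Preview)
Your proposal is correct and follows essentially the same route as the paper's proof: both upgrade hypothesis~(3) by induction, establish well-definedness of $\pi$ via a common refinement of two disjoint representations, and then deduce multiplicativity from Proposition~\ref{convoprod}(2). Your Boolean ``atoms'' of the combined family $\{U_i\} \cup \{V_j\}$ are, because each family is already pairwise disjoint and the supports coincide, precisely the nonempty intersections $U_i \cap V_j$ --- which is exactly the refinement $K = \{U \cap V \mid U \in F,\, V \in H,\, U \cap V \ne \varnothing\}$ the paper uses; you are also slightly more explicit than the paper about additivity and the absorption of zero-coefficient atoms.
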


\begin{proof}
First we observe that, by condition (3) and induction on~$n$, there holds $t_{\bigcup^n_{i=1} U_i} = \sum^n_{i=1} t_{U_i}$ whenever $U_1, \dots, U_n$ are mutually disjoint compact open bisections such that $\bigcup^n_{i=1} U_i$ is a (compact open) bisection.

Next we show that the formula $\sum_{U\in F}a_U1_U\longmapsto \sum_{U\in F}a_Ut_U$ is well-defined on linear combinations of indicator functions, where $F$ is a finite set of mutually disjoint compact open bisections. Assume that
\[ \sum_{U \in F} a_U 1_U = \sum_{V \in H} b_V 1_V , \]
where each of~$F$ and~$H$ is a finite set of mutually disjoint compact open bisections.  Let $K = \{ U \cap V \mid U \in F ,\, V \in H ,\, U \cap V \ne \varnothing \}$.  Then, since~$\mcG$ is Hausdorff, and by Lemma~\ref{cobiseclem}, every element of~$K$ is a compact open bisection of~$\mcG$.  Also, for each $U \in F$ and each $V \in H$, we have that
$U = \bigsqcup \{ W \in K \mid W \subseteq U \}$ and $V = \bigsqcup \{ W \in K \mid W \subseteq V \}$, so $t_U = \sum_{W \in K ,\, W\subseteq U} t_W$ and $t_V = \sum_{W \in K ,\, W\subseteq V} t_W$. Hence we find that
\[ \sum_{U \in F} a_U t_U = \sum_{U \in F} \sum_{W \in K ,\, W \subseteq U} a_U t_W = \sum_{W \in K} \big( \sum_{U \in F ,\, W \subseteq U} a_U \big) t_W , \]
and similarly
\[\sum_{V \in H} b_V t_V = \sum_{W \in K} \big( \sum_{V \in H ,\, W \subseteq V} b_V \big) t_W . \]
Fix $W \in K$ and let $\alpha \in W$.  By definition of~$K$, for all $U \in F$, we obtain that $\alpha \in U$ if and only if $W \subseteq U$.  Therefore,
\[ \sum_{U \in F} a_U 1_U (\alpha) = \sum_{U \in F ,\, \alpha \in U} a_U = \sum_{U \in F ,\, W \subseteq U} a_U . \]
Similarly, $\sum_{V \in H} b_V 1_V(\alpha) = \sum_{V \in H ,\, W \subseteq V} b_V$, and hence
\[ \sum_{U \in F ,\, W \subseteq U} a_U = \sum_{V \in H ,\, W \subseteq V} b_V . \]
It follows that $\sum_{U \in F} a_U t_U = \sum_{V \in H} b_V t_V$.  So there exists an $S$-homomorphism $\pi \colon A_S(\mcG) \longrightarrow B$ such that $\pi(1_U) = t_U$ for all compact open bisections~$U$.  It is sufficient to show that~$\pi$ is multiplicative.  However, this is straightforward by using Proposition~\ref{convoprod} (2), finishing the proof. 
\end{proof}

A major motivation for introducing Steinberg algebras has been the
study of discrete inverse semigroup algebras~\cite{s:agatdisa}.  An
\emph{inverse semigroup} is a semigroup~$G$ such that for each
$a \in G$ there is a unique $b \in G$ (denoted~$a^*$) satisfying
$a b a = a$ and $b a b = b$.  The idempotent elements~$E_G$ in an
inverse semigroup~$G$ form a commutative idempotent semigroup,
\textit{i.e.}, a semilattice.  Moreover, any inverse semigroup~$G$
defines a groupoid~$\mcG_G$ by letting the unit space be $E_G$ and
interpreting each $a \in G$ as an invertible morphism from $s(a)
\defeq a^* a$ to $r(a) \defeq a a^*$, hence $a^{-1} = a^*$ and a
composition $a b$ is defined in~$\mcG_G$ iff $b b^* = a^* a$.

It is shown that, in particular, for any commutative unital ring~$R$
and any finite inverse semigroup~$G$ there is an $R$-algebra
homomorphism \begin{equation}\label{eq:iso}
  R[G] \,\cong\, A_R(\mcG_G) \tag{$\dagger$} \end{equation}
between the semigroup algebra of~$G$ over~$R$ and the Steinberg
algebra of the groupoid~$\mcG_G$; this isomorphism is established
using the Möbius function on the semilattice~$E_G$.  Notice that in
the special case that the inverse semigroup is itself a semilattice
$G = E$, the groupoid $\mcG_E$ consists just of units and we easily
see that $A_R(\mcG_E) \cong R^E$.  As our last result in this section
shows, the isomorphism~(\ref{eq:iso}) fails in the general semiring
setup, the reason for which may be attributed to a ``lack of zero
sums'' in general, thus illustrating an interesting new feature.

\begin{prop}\label{prop:inverse_sg}
  Let~$E$ be a finite semilattice with $\vert E \vert > 1$, and
  let~$S$ be an additively idempotent semifield.  Then the semigroup
  algebra $S[E]$ is not isomorphic to the Steinberg algebra
  $A_S(\mcG_E) \cong S^E$.
\end{prop}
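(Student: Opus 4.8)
The plan is to single out a purely semiring-theoretic property that is invariant under $S$-algebra isomorphism and that separates the two algebras, namely the presence of nonzero zero divisors. I claim that $A_S(\mcG_E) \cong S^E$ always has nonzero zero divisors, whereas $S[E]$ has none; since an $S$-algebra isomorphism is in particular a semiring isomorphism, and being free of zero divisors is preserved by such maps, this yields the non-isomorphism at once.

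The Steinberg side is immediate. Writing $E = \{ e_1, \dots, e_n \}$ with $n = \vert E \vert > 1$, the coordinate idempotents $x = (1, 0, \dots, 0)$ and $y = (0, 1, 0, \dots, 0)$ of $S^E$ are both nonzero (as $1 \ne 0$ in $S$) while $x y = 0$. Equivalently, in groupoid terms the characteristic functions $1_{\{e_i\}}$ of distinct units are nonzero and orthogonal, since $1_{\{e_i\}} \ast 1_{\{e_j\}} = 1_{\{e_i\} \cap \{e_j\}} = 1_{\varnothing} = 0$ for $i \ne j$ by Proposition~\ref{convoprod}~(2). Thus $A_S(\mcG_E)$ has zero divisors.

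The heart of the argument is to show that $S[E]$ has no zero divisors, and here both hypotheses on~$S$ enter. First, since~$S$ is additively idempotent, its natural order ($a \le b \iff a + b = b$) is a partial order having~$0$ as least element ($0 + a = a$) and satisfying $a \le a + b$ (because $a + (a+b) = (a+a) + b = a + b$); consequently a finite sum $\sum_i c_i$ in~$S$ equals~$0$ if and only if every $c_i = 0$, by antisymmetry. Second, since~$S$ is a semifield its nonzero elements form a group, so~$S$ itself has no zero divisors. Now take nonzero elements $\xi = \sum_e a_e e$ and $\eta = \sum_f b_f f$ of $S[E]$, and fix $e, f \in E$ with $a_e \ne 0$ and $b_f \ne 0$. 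The coefficient of $g = e f = e \wedge f$ in the product $\xi \eta = \sum_g \bigl( \sum_{e' f' = g} a_{e'} b_{f'} \bigr) g$ is a finite sum containing the term $a_e b_f$, which is nonzero as~$S$ has no zero divisors; by the first fact this whole coefficient is then nonzero, whence $\xi \eta \ne 0$.

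I expect the only subtle point to be the first of these facts -- that additive idempotency forces a vanishing finite sum to have all summands zero -- which is precisely the feature absent in the ring case and which underlies the announced ``lack of zero sums''. Indeed, over a commutative unital ring~$R$ one has $R[E] \cong R^E$ via the Möbius function of~$E$ (using subtraction), and then $R[E]$ \emph{does} have zero divisors; the additive idempotency of~$S$ is exactly what blocks this. Combining the two displays above, $S[E]$ is a domain while $A_S(\mcG_E) \cong S^E$ is not, so the two algebras cannot be isomorphic.
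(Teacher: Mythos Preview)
Your proof is correct and takes a genuinely different route from the paper's.

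The paper argues by contradiction from the existence of an isomorphism: it first shows that if $S[E]$ is unital then the semilattice~$E$ must have a greatest element~$e$ (using zero-sum freeness applied to the equation $1 \cdot u = u$ for a maximal~$u$), so that $1 = e$ in $S[E]$; it then pulls back a decomposition $1 = f + g$ by two nontrivial idempotents from $S^E$ and uses zero-sum freeness again, together with the fact that the only multiplicative idempotents of a semifield are $0$ and~$1$, to force $f, g \in \{0, 1\}$, a contradiction. Your argument instead isolates a single invariant---the absence of zero divisors---and checks it directly on each side: $S^E$ visibly has orthogonal idempotents, while in $S[E]$ the coefficient of $e \wedge f$ in $\xi \eta$ dominates the nonzero term $a_e b_f$ by zero-sum freeness. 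Both proofs rest on the same two ingredients (additive idempotency $\Rightarrow$ zero-sum free; semifield $\Rightarrow$ no zero divisors in~$S$), but yours packages them into a cleaner one-line obstruction. As a bonus, your argument shows the stronger fact that $S[E]$ is an entire semiring for \emph{any} semilattice~$E$, and does not use finiteness of~$E$ at all; the paper's proof, by contrast, needs finiteness to produce a maximal element and to ensure $A_S(\mcG_E) \cong S^E$ is unital.
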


\begin{proof}
  First, notice that an additively idempotent semiring~$S$ is zero-sum
  free, \textit{i.e.}, $s + t = 0$ implies $s = t = 0$, for any
  $s, t \in S$.
  
  Now suppose that an isomorphism $S[E] \cong S^E$ exists.  Then the
  semigroup algebra $S[E]$ has an identity $1 = \sum_{w \in E} s_w w
  \in S[E]$, where $s_w \in S$ for $w \in E$.  Consider the
  semigroup~$E$ as a finite meet-semilattice and let $u \in E$ be any
  maximal element.  Then $1 u = \sum_{w \in E} s_w w u = u$.  But for
  all $w \in E$ with $w \ne u$ we have $w u = w \wedge u < u$, since
  otherwise $w > u$ whereas~$u$ is maximal.  Therefore, $\sum_{w \ne u}
  s_w w u = 0$ and hence $s_w = 0$ for all $w \ne u$, as the
  semiring~$S$ is zero-sum free.  It follows that there cannot be
  distinct maximal elements in~$E$, whence by finiteness the
  meet-semilattice~$E$ has a greatest element.

  We may therefore assume that the semigroup~$E$ has a neutral
  element~$e$, and hence the semigroup algebra~$S[E]$ has an identity
  $1 = e$.  As $\vert E \vert > 1$ the isomorphism $S[E] \cong S^E$
  implies that there are idempotents $f, g \in S[E] \setminus \{ 1 \}$
  such that $f + g = 1$.  Writing $f = \sum_{w \in E} s_w w$ and
  $g = \sum_{w \in E} t_w w$, for all $w \ne e$ it follows that
  $s_w + t_w = 0$ and hence $s_w = t_w = 0$, since the semiring~$S$ is
  zero-sum free.  We infer that $f = s_e e$ and $g = t_e e$ with
  $s_e, t_e \in S$ multiplicatively idempotent.  But then, because~$S$
  is a semifield, we have $s_e, t_e \in \{ 0, 1_S \}$ so that
  $f, g \in \{ 0, 1 \}$.  This contradiction concludes the proof.
\end{proof}

\section{Congruence-simpleness of Steinberg algebras}\label{sec:simple}

The main goal of this section is to present a description of the congruence-simple Steinberg algebras $A_S(\mcG)$ of Hausdorff ample groupoids $\mcG$ over a commutative semiring~$S$, which extends the well-known description when
the ground commutative semiring~$S$ is either a field or a commutative unital ring (see \cite[Th.~4.1]{bcfs:soaateg}, \cite[Th.~4.1, Cor.~4.6]{ce:utfsa} and \cite[Th.~3.5]{s:spasoegawatisa}).  

We begin by recalling some important notations of groupoids.
Let~$\mcG$ be a groupoid and let $D$, $E$ be subsets of $\mcGo$.  Define
\[ \mcG_D \defeq \{ \gamma \in \mcG \mid s(\gamma) \in D \} \,,\ \mcG^E \defeq \{\gamma \in \mcG \mid r(\gamma) \in E \} ~\text{ and }~ \mcG_D^E \defeq \mcG_D \cap \mcG^E \,. \]
In a slight abuse of notation, for $u, v \in \mcGo$ we denote $\mcG_u \defeq \mcG_{\{ u \}}$, $\mcG^v \defeq \mcG^{\{ v \}}$ and $\mcG_u^v \defeq \mcG_u \cap \mcG^v$.  For a unit~$u$ of $\mcG$ the group $\mcG_u^u = \{ \gamma \in \mcG \mid s(\gamma) = u = r(\gamma) \}$ is called its \textit{isotropy group}.  The \textit{isotropy subgroupoid} of~$\mcG$ is $\on{Iso}(\mcG) \defeq \bigcup_{u \in \mcGo} \mcG^u_u$.  A subset~$D$ of~$\mcGo$ is called \textit{invariant} if $s(\gamma) \in D$ implies $r(\gamma) \in D$ for all $\gamma \in \mcG$; equivalently, $D = \{ r(\gamma) \mid s(\gamma)\in D\} = \{s(\gamma) \mid r(\gamma) \in D\}$.  Also, $D$ is invariant if and only if its complement is invariant.

\begin{defn}[{\cite[Def.~2.1]{bcfs:soaateg}}]
Let~$\mcG$ be a Hausdorff ample groupoid.  We say that~$\mcG$ is \textit{minimal} if~$\mcGo$ has no nontrivial open invariant subsets, and we call~$\mcG$ \textit{effective} if the interior of $\on{Iso}(\mcG) \setminus \mcGo$ is empty.
\end{defn}

Note that effective groupoids are related to so-called “topologically principal” groupoids, \textit{i.e.}, in which the units with trivial isotropy are dense in the unit space.
Any Hausdorff ample groupoid being topologically principal is in fact effective, while the converse holds if the groupoid is second-countable (see~\cite[Lem.~3.1]{bcfs:soaateg}).

We now describe necessary conditions for Steinberg algebras of Hausdorff ample groupoids over commutative semirings to be congruence-simple.

\begin{prop}\label{Neccondprop}
If the Steinberg algebra $A_S(\mcG)$ of a Hausdorff ample groupoid~$\mcG$ over a commutative semiring~$S$ is congruence-simple, then there holds:
\begin{enumerate}[\quad \upshape (1)]
\item $S$~is either a field or the Boolean semifield $\B$;
\item $\mcG$~is both minimal and effective.
\end{enumerate}
\end{prop}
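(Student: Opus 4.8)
The plan is to prove both necessary conditions by contraposition: from a ``defect'' of the groupoid or of $S$ I will manufacture a nontrivial congruence on $A_S(\mcG)$ (one different from $\vartriangle_{A_S(\mcG)}$ and from the universal congruence), equivalently, by Remark~\ref{Cong-simRem}, a nonzero non-injective homomorphism out of $A_S(\mcG)$. Throughout I use that congruence-simpleness forces $A_S(\mcG) \ne 0$, so there is a nonempty compact open bisection and hence a nonempty compact open subset of~$\mcGo$ (for instance the source of such a bisection).

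For~(1) I would show that $S$ itself is congruence-simple and then invoke the classification of commutative congruence-simple semirings (fields or~$\B$) recalled in the introduction. Given a congruence $\rho$ on~$S$, define a relation $\bar\rho$ on $A_S(\mcG)$ by setting $f \mathrel{\bar\rho} g$ iff $(f(\gamma),g(\gamma)) \in \rho$ for all $\gamma \in \mcG$. Since the convolution sums are finite and $\rho$ respects $+$ and~$\cdot$, a pointwise check shows $\bar\rho$ is a congruence on $A_S(\mcG)$. Fixing a nonempty compact open $W \subseteq \mcGo$, the assignment $s \mapsto s 1_W$ is injective; so if $(s,t) \in \rho$ with $s \ne t$ then $s 1_W \mathrel{\bar\rho} t 1_W$ while $s 1_W \ne t 1_W$ (whence $\bar\rho \ne \vartriangle$), and if $(a,b) \notin \rho$ then $a 1_W$ and $b 1_W$ are not $\bar\rho$-related (whence $\bar\rho$ is not universal). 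Thus a nontrivial $\rho$ yields a nontrivial $\bar\rho$, and congruence-simpleness of $A_S(\mcG)$ forces $S$ to be congruence-simple.

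For the minimality half of~(2), suppose $\mcGo$ has a nontrivial open invariant subset $D$, and put $C \defeq \mcGo \setminus D$, a nonempty closed invariant set with $\mcG_C = \mcG \setminus \mcG_D$. One checks that $\mcG_C$ is again a Hausdorff ample groupoid and that restriction $\Psi \colon A_S(\mcG) \to A_S(\mcG_C)$, $f \mapsto f|_{\mcG_C}$, is an $S$-algebra homomorphism; multiplicativity uses exactly the invariance of~$C$, which forces both factors of any factorization $\gamma = \alpha\beta$ with $\gamma \in \mcG_C$ to lie in $\mcG_C$. This $\Psi$ is nonzero (a compact open neighbourhood of a point of~$C$ restricts to a nonzero function) but not injective: choosing a nonempty compact open $V \subseteq D$ gives $\Psi(1_V) = 1_{V \cap C} = 0 = \Psi(0)$ while $1_V \ne 0$. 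By Remark~\ref{Cong-simRem} this contradicts congruence-simpleness, so $\mcG$ must be minimal. (Note the asymmetry: restricting to the \emph{open} subgroupoid $\mcG_D$ would not detect the defect, since $C$ may have empty interior, whereas restricting to the \emph{closed} complement makes the witness $1_V$ available.)

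For effectiveness, suppose $\mcG$ is not effective and choose a nonempty compact open bisection $U \subseteq \on{Iso}(\mcG) \setminus \mcGo$, so that with $W \defeq s(U) = r(U)$ one has $1_U \ast 1_{U^{-1}} = 1_W = 1_{U^{-1}} \ast 1_U$ by Proposition~\ref{convoprod}(2), and $1_U \ne 1_W$ since $U \cap \mcGo = \varnothing$. The goal is a nonzero homomorphism out of $A_S(\mcG)$ that identifies $1_U$ with $1_W$, i.e.\ one ``collapsing the spurious isotropy''; such a map is automatically non-injective and contradicts Remark~\ref{Cong-simRem}. When $S$ is a field (the first alternative from~(1)) congruences coincide with ideals, and one argues classically that the ideal generated by $1_U - 1_W$ is proper and nonzero, recovering the known ring-case statement. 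When $S = \B$ — equivalently, after the reduction to additively idempotent coefficients — I would build the collapsing map from a quotient of~$\mcG$ by an open piece of its isotropy bundle, defining it on generators by $1_V \mapsto 1_{q(V)}$ and checking the three relations of the universal property (Theorem~\ref{univproperty}). The main obstacle lies precisely here: verifying that this collapse is a well-defined $S$-algebra homomorphism in the non-additive semiring setting — that the quotient stays a Hausdorff ample groupoid and that images of compact open bisections behave correctly — which is where the order-theoretic techniques on additively idempotent semirings are needed.
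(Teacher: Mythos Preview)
Your arguments for~(1) and for minimality in~(2) are correct. For~(1) you take a route different from the paper's: rather than building a base-change homomorphism $A_S(\mcG)\to A_{S/\rho}(\mcG)$ via the universal property, you lift a congruence~$\rho$ on~$S$ pointwise to one on $A_S(\mcG)$. This is a perfectly valid and arguably more elementary alternative; the paper's approach has the minor advantage of staying within the language of homomorphisms already set up, but yours avoids invoking Theorem~\ref{univproperty}. For minimality your restriction-to-the-closed-invariant-set argument is essentially the paper's.

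The effectiveness argument, however, has a genuine gap, and you identify it yourself: you do not actually construct the ``collapsing'' homomorphism. Splitting into the field case and the~$\B$ case does not help. In the field case, asserting that the ideal generated by $1_U-1_W$ is proper is precisely the nontrivial point --- one needs a nonzero homomorphism annihilating $1_U-1_W$, which is exactly the collapsing map you are trying to build. In the~$\B$ case your quotient-groupoid idea can in principle be made to work, but verifying that $\mcG/N$ is again Hausdorff ample and that $1_V\mapsto 1_{q(V)}$ respects the relations of Theorem~\ref{univproperty} is real work that you have not done.

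The paper sidesteps all of this with a single uniform construction that you have missed: let $A_S(\mcG)$ act on the free $S$-semimodule $F(\mcGo)$ with basis~$\mcGo$, by declaring that for a compact open bisection~$V$ the endomorphism $t_V$ sends a unit~$x$ to $r(\alpha)$ if $x=s(\alpha)$ for some $\alpha\in V$, and to~$0$ otherwise. The three relations of Theorem~\ref{univproperty} are immediate, so one obtains $\phi\colon A_S(\mcG)\to\on{End}_S(F(\mcGo))$ with $\phi(1_V)=t_V$. This~$\phi$ is nonzero, and if $U\subseteq\on{Iso}(\mcG)\setminus\mcGo$ is a nonempty compact open bisection with $s(\alpha)=r(\alpha)$ for all $\alpha\in U$, then $t_U=t_{s(U)}$ while $1_U\ne 1_{s(U)}$, so~$\phi$ is not injective. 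No case split on~$S$, no quotient groupoid, no order-theoretic techniques are needed here.
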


\begin{proof}
(1) First, let us show that there are only the two trivial congruences on~$S$.  Indeed, if~$\sim$ is a proper congruence
on~$S$, the natural surjection $\pi \colon S \longrightarrow \overline{S} \defeq S / {\sim}$, defined by $\pi(\lambda) = \overline{\lambda}$, is neither zero nor an injective homomorphism.  For any compact open bisection~$U$ of~$\mcG$, we denote by~$t_U$ the characteristic function $\mcG \longrightarrow \overline{S}$ of~$U$.  It is clear that the collection $\{ t_U \mid U$ is a compact open bisection$\}$ of elements of $A_{\overline{S}}(\mcG)$ satisfies conditions (1), (2) and (3) of Theorem~\ref{univproperty}.  Accordingly, there is a unique $S$-algebra homomorphism $\phi \colon A_S(\mcG) \longrightarrow A_{\overline{S}}(\mcG)$ such that $\phi(\lambda 1_U) = \overline{\lambda} t_U$ for any compact open bisection~$U$ and $\lambda \in S$.  Since~$\pi$ is not injective, there exist two distinct elements $a, b \in S$ such that $\overline{a} = \overline{b}$.  Fix a nonempty compact open bisection~$U$.  We then have $a 1_U \ne b 1_U$ and $\phi(a 1_U) = \overline{a} t_U = \overline{b} t_U = \phi(b 1_U)$, and so~$\phi$ is not injective.  Therefore, $A_S(\mcG)$ is not congruence-simple by Remark~\ref{Cong-simRem}.  Thus, the commutative semiring~$S$ is congruence-simple, and it follows by \cite[Th.~3.2]{mf:ccs} that~$S$ is either a field or the Boolean semifield~$\B$. \medskip

(2) Assume that~$\mcGo$ contains a nontrivial open invariant subset~$V$.  Let $D \defeq \mcGo \setminus V$.  We then have that~$\mcG_D$ coincides with the \textit{restriction} \[ \mcG|_{D} \defeq \{ \gamma \in \mcG \mid s(\gamma), \, r(\gamma) \in D \} \] of~$\mcG$ to~$D$.
Thus~$\mcG_D$ is a topological subgroupoid of~$\mcG$ with the relative topology, and its unit space is~$D$.  Since~$D$ is closed in~$\mcGo$, and the map $s \colon \mcG \longrightarrow \mcGo$ is continuous, $\mcG_D = s^{-1}(D)$ is closed in $\mcG$, and so $U \cap \mcG_D$ is a compact open bisection of~$\mcG_D$ for any compact open bisection~$U$ of~$\mcG$.  This implies that~$\mcG_D$ is a Hausdorff ample groupoid.

For any compact open bisection~$U$ of~$\mcG$, we denote by~$t_U$ the characteristic function $\mcG_D \longrightarrow S$ of $U \cap \mcG_D$.  It is obvious that the collection $\{ t_U \mid U$ is a compact open bisection$\}$ of elements of $A_{S}(\mcG_D)$ satisfies conditions (1), (2) and (3) of Theorem~\ref{univproperty}, by which there is a unique $S$-algebra homomorphism $\phi \colon A_S(\mcG) \longrightarrow A_{S}(\mcG_D)$ such that $\phi(\lambda 1_U) = \lambda t_U$ for all compact open bisections~$U$ and $\lambda \in S$.  Since~$V$ is a nontrivial open subset of~$\mcGo$, there exist nonempty compact open subsets~$U_1$ and~$U_2$ of~$\mcGo$ such that $U_1 \subseteq V$ and $U_2 \cap D \ne \varnothing$.  This implies that $\phi(1_{U_1}) = 1_{U_1\cap \mcG_D} = 0$ (since $U_1 \cap \mcG_D = \varnothing$) and $\phi(1_{U_2}) = 1_{U_2\cap \mcG_D} \ne 0$, so~$\phi$ is a nonzero homomorphism, but not injective.  Therefore, $A_S(\mcG)$ is not congruence-simple by Remark~\ref{Cong-simRem}, whence~$\mcG$ is minimal. \medskip

We next show that~$\mcG$ is effective, following essentially the proof of \cite[Prop.~4.4]{bcfs:soaateg}.  Denote by $F(\mcGo)$ the free $S$-semimodule with basis~$\mcGo$.  Let~$U$ be a compact open bisection of~$\mcG$.
Observe that $s(\alpha) \longmapsto r(\alpha)$ determines a homeomorphism from $s(U)$ to $r(U)$.  We define a map $f_U \colon \mcGo \longrightarrow F(\mcGo)$ by
\[ f_U(x) = \begin{cases}
r(\alpha) & \text{if } x = s(\alpha) \text{ and } \alpha \in U , \\
0 & \text{otherwise}, \end{cases} \]
for all $x \in \mcGo$.  By the universal property
of the free $S$-semimodule $F(\mcGo)$, there exists an element $t_U \in \on{End}_S(F(\mcGo))$ extending~$f_U$.  Now we check that
\begin{enumerate}
\item $t_{\varnothing} = 0$;
\item $t_Ut_V = t_{UV}$ for all compact open bisections~$U$ and~$V$; and
\item $t_U + t_V = t_{U\cup V}$ whenever~$U$ and~$V$ are disjoint compact open bisections such that $U \cup V$ is a bisection.
\end{enumerate}
It is easy to see that each of these conditions holds for the functions~$f_U$, and so for the endomorphisms~$t_U$ as well.  Then, by Theorem~\ref{univproperty}, there is a unique $S$-algebra homomorphism $\phi \colon A_S(\mcG) \longrightarrow \on{End}_S(F(\mcGo))$ such that $\phi(1_U) = t_U$ for all compact open bisection~$U$.  The homomorphism~$\phi$ is nonzero because~$t_U$ is nonzero for any nonempty compact open bisection~$U$ of~$\mcG$.

Now suppose that~$\mcG$ is not effective.  By \cite[Lem.~3.1]{bcfs:soaateg}, there exists a nonempty compact open bisection $U \subseteq \mcG \setminus \mcGo$ such that $s(\alpha) = r(\alpha)$ for all $\alpha \in U$.  It implies that $U \ne s(U)$ and $t_U = t_{s(U)}$, and hence $1_U \ne 1_{s(U)}$ and \[ \phi(1_U) = t_U = t_{s(U)} = \phi(1_{s(U)}) , \] showing that~$\phi$ is not injective.  Therefore, $A_S(\mcG)$ is not congruence-simple by Remark~\ref{Cong-simRem}.  Thus, $\mcG$~is effective, finishing the proof.
\end{proof}

The following result, being a “congruence” analog of \cite[Lem.~4.2]{bcfs:soaateg} and \cite[Prop.~3.3]{s:spasoegawatisa}, plays an important role in the proof of our main result below.

\begin{lem}\label{cong-reduction}
Let~$\mcG$ be an effective Hausdorff ample groupoid and~$S$ an additively idempotent semiring.  Then for every congruence~$\rho$ on $A_S(\mcG)$ different from the diagonal congruence, there exists $s \in S \!\setminus\! \{ 0 \}$ and a nonempty compact open subset~$W$ of~$\mcGo$ such that $(s 1_W, 0) \in \rho$.
\end{lem}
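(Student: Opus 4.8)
The plan is to wake up the nontriviality of $\rho$, exploit the natural order coming from additive idempotency, transport the point where the two related functions disagree into the unit space via a bisection, and then use effectiveness to collapse the resulting relation onto $A_S(\mcGo)$. Since $S$ is additively idempotent and the operations on $A_S(\mcG)$ are pointwise, $A_S(\mcG)$ is itself additively idempotent, so $h+h=h$ and we have the natural order $h\le h'$ iff $h+h'=h'$. First, as $\rho$ is not the diagonal congruence, choose $(f,g)\in\rho$ with $f\ne g$. Adding $f$, resp.\ $g$, to both sides gives $(f,f+g),(g,f+g)\in\rho$, and since $f\ne g$ at least one of these is nondiagonal; relabelling, we obtain $(a,b)\in\rho$ with $a+b=b$ (so $a\le b$ pointwise) and $a\ne b$. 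Using the natural order, I would take the witnessing point $\gamma_0$ to lie in the support of $b$ where $a$ vanishes, so that $a(\gamma_0)=0\ne b(\gamma_0)=:t_0$ (this is where the arithmetic of the ground semiring is used, and is automatic over~$\B$). Writing $b=\sum_i s_i 1_{U_i}$ as in Lemma~\ref{expresslem}(1), fix by amplenesss a compact open bisection $B$ with $\gamma_0\in B\subseteq U_{i_0}$.

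Second, I transport the disagreement to a unit. Put $u_0\defeq s(\gamma_0)\in\mcGo$; by Lemma~\ref{cobiseclem}(1), $1_{B^{-1}}\in A_S(\mcG)$, so $(1_{B^{-1}}\ast a,\,1_{B^{-1}}\ast b)\in\rho$. A short convolution computation, using that $s|_B$ is injective, shows $(1_{B^{-1}}\ast h)(u_0)=h(\gamma_0)$ for every $h\in A_S(\mcG)$; hence $a'\defeq 1_{B^{-1}}\ast a$ and $b'\defeq 1_{B^{-1}}\ast b$ satisfy $a'(u_0)=0\ne t_0=b'(u_0)$, while $a'\le b'$ because left multiplication preserves the natural order. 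Thus the disagreement now sits at the unit~$u_0$.

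Third, and this is the crux, I would conjugate by $1_W$ for a suitable compact open $W\subseteq\mcGo$. For $W\subseteq\mcGo$ one checks from Proposition~\ref{convoprod}(2) that $1_W\ast h\ast 1_W$ is the restriction of $h$ to $\mcG^W_W=\{\gamma\mid s(\gamma),r(\gamma)\in W\}$; the diagonal part of $b'$ then contributes its values on $W$, and the off-diagonal part its values on $\mcG^W_W\setminus\mcGo$. The aim is to choose $W$ killing the latter contribution, i.e.\ so that no off-diagonal bisection in the finite support of $b'$ meets $\mcG^W_W$. This is exactly where effectiveness enters: writing that off-diagonal support as a finite disjoint union of compact open bisections $V_1,\dots,V_m\subseteq\mcG\setminus\mcGo$, each set $Z_j\defeq s(V_j\cap\on{Iso}(\mcG))$ is closed (as $\on{Iso}(\mcG)$ is closed in the Hausdorff groupoid and $V_j$ is compact) and, since $\on{Iso}(\mcG)\setminus\mcGo$ has empty interior and $s|_{V_j}$ is a homeomorphism, nowhere dense in $\mcGo$. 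A finite union of closed nowhere-dense sets is nowhere dense, so on a compact open neighbourhood $O\ni u_0$ on which $a',b'$ take the constant diagonal values $0,t_0$ I can pick a unit $u^*\in O\setminus\bigcup_j Z_j$. Then no $V_j$ carries an isotropy element at $u^*$, and shrinking $W\ni u^*$ under finitely many conditions (avoiding the clopen $s(V_j)$ when $u^*\notin s(V_j)$, and otherwise using continuity of $r\circ(s|_{V_j})^{-1}$, which sends $u^*$ off $u^*$, to force ranges of sources in $W$ outside $W$) yields $\mcG^W_W\cap V_j=\varnothing$ for all $j$. I expect this localization step — reconciling the finitely many off-diagonal pieces with the possible isotropy at the chosen unit — to be the main obstacle, following essentially the proof of \cite[Prop.~4.4]{bcfs:soaateg}.

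Finally, with such a $W$ one has $1_W\ast a'\ast 1_W=a'|_W=0$ and $1_W\ast b'\ast 1_W=b'|_W=t_0 1_W$, both supported on $\mcGo$. Since $\rho$ is a congruence, $(0,\,t_0 1_W)\in\rho$, that is $(t_0 1_W,\,0)\in\rho$ with $t_0\in S\setminus\{0\}$ and $W$ a nonempty compact open subset of~$\mcGo$, as required.
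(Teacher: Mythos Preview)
Your argument follows the same strategy as the paper's: reduce to $(a,b)\in\rho$ with $a\le b$ and $a\ne b$, push the disagreement to $\mcGo$ by multiplying with the characteristic function of a bisection (you left-multiply by $1_{B^{-1}}$, the paper right-multiplies by $1_U$), then use effectiveness to find a compact open $W\subseteq\mcGo$ that kills the off-diagonal part under conjugation by $1_W$. Where the paper simply invokes \cite[Lem.~3.1]{bcfs:soaateg} (your reference to Prop.~4.4 there is a slip) to obtain $W$ with $WKW=\varnothing$, you unpack the underlying Hausdorff/nowhere-dense argument directly; this is a presentational difference, not a mathematical one.

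The step you flag as ``automatic over~$\B$'' deserves more than a parenthesis. For a general additively idempotent~$S$, $a<b$ does \emph{not} guarantee a point~$\gamma_0$ with $a(\gamma_0)=0\ne b(\gamma_0)$, so your choice of witness is unjustified --- and the paper makes the same leap when it infers the strict support inclusion $\bigcup_i U_i\cap\mcGo\subsetneq\bigcup_j V_j\cap\mcGo$ from $\vartheta|_{\mcGo}<\psi|_{\mcGo}$. In fact the lemma as stated is false in that generality: with $\mcG$ the one-point groupoid and $S=\{0,a,1\}$ (addition $=\max$, $a\cdot a=a$), the congruence on $A_S(\mcG)\cong S$ with classes $\{0\}$ and $\{a,1\}$ is nondiagonal yet identifies nothing nonzero with~$0$. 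So both your argument and the paper's are complete for $S=\B$ --- which is the only case used in Theorem~\ref{Nec-suffcondtheo} --- but neither proves the lemma in the stated generality.
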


\begin{proof}
Since~$\rho$ is different from the diagonal congruence, there are elements $f, g \in A_S(\mcG)$ such that $f \ne g$ and $(f, g) \in \rho$.  It is clear that $A_S(\mcG)$ is an additively idempotent hemiring, hence $A_S(\mcG)$ is partially ordered  by defining $a \le b$ iff $a + b = b$, for $a, b \in A_S(\mcG)$.
Now $(f, f \!+\! g) = (f \!+\! f, f \!+\! g) \in \rho$ and $(g, f \!+\! g) = (g \!+\! g, f \!+\! g) \in \rho$, and since $f \ne g$, either $f < f + g$ or $g < f + g$.  Therefore,  without loss of generality, one may assume that $f < g$.  Then, there exists an element $\alpha \in \mcG$ such that $f(\alpha) < g(\alpha)$.  Let~$U$ be a compact open bisection of~$\mcG$ containing $\alpha^{-1}$, and let $x \defeq r(\alpha) = \alpha \alpha^{-1} \in \mcGo$.  We have that
\[ f \ast 1_U(x) = \sum_{x = \gamma \beta} f(\gamma) 1_U(\beta) = f(\alpha \alpha^{-1} \alpha) 1_U(\alpha^{-1}) = f(\alpha) \,, \]
since the unique $\beta \in U$ such that $s(\beta) = s(x) = x$ is given by $\beta = \alpha^{-1}$.
Similarly, $g \ast 1_U(x) = g(\alpha)$, and so $f \ast 1_U(x) < g \ast 1_U(x)$.

Thus, there exist two elements $\vartheta, \psi \in A_S(\mcG)$ such that $(\vartheta, \psi) \in \rho$ with $\vartheta \le \psi$ and $\vartheta|_{\mcGo} < \psi|_{\mcGo}$ (we may take $\vartheta = f \ast 1_U$ and $\psi = g\ast 1_U$ as above).  Write 
\[ \vartheta = \sum_{i=1}^n s_i 1_{U_i} \quad\text{ and }\quad \psi = \sum_{j=1}^m t_j 1_{V_j} \]
where $s_i, t_j \in S \!\setminus\! \{ 0 \}$ and each of $\{ U_i \mid i = 1, \dots, n \}$ and $\{ V_j \mid j = 1, \dots, m \}$ is a set of mutually disjoint compact open bisections of~$\mcG$.  Since~$\mcGo$ is clopen in~$\mcG$ by the Hausdorff property, $U_i \cap \mcGo$ and $V_j \cap \mcGo$ are compact open subsets of~$\mcGo$; that means, they are compact open bisections of~$\mcG$, and so, by Lemma~\ref{cobiseclem}, $U_i \setminus (U_i \cap \mcGo)$ and $V_j \setminus (V_j \cap \mcGo)$ are also compact open bisections of~$\mcG$.  We then have that
\[ \vartheta = \sum_{i=1}^n s_i 1_{U_i \setminus (U_i \cap \mcGo)} + \sum_{i=1}^n s_i 1_{U_i \cap \mcGo} \text{ and }
\psi = \sum_{j=1}^m t_j 1_{V_j \setminus (V_j \cap \mcGo)} + \sum_{j=1}^m t_j 1_{V_j \cap \mcGo} \]
and since $\vartheta \le \psi$ and $\vartheta|_{\mcGo} < \psi|_{\mcGo}$, it follows that
\[ \bigcup_{i=1}^n U_i \setminus (U_i \cap \mcGo) \subseteq \bigcup_{j=1}^m V_j \setminus (V_j \cap \mcGo) \quad\text{and}\quad \bigcup_{i=1}^n U_i \cap \mcGo \subset \bigcup_{j=1}^m V_j \cap \mcGo . \]
The set $\bigcup_{i=1}^n U_i \cap \mcGo$ is closed in~$\mcGo$ as a compact subset in a Hausdorff space, and moreover the $V_j \cap \mcGo$ are open in $\mcGo$.  Thus $(V_j \cap \mcGo) \setminus (\bigcup_{i=1}^n U_i \cap \mcGo)$ is open in~$\mcGo$, and non-empty for some~$j_0$.
Let $K \defeq \bigcup_{j=1}^m V_j \setminus (V_j \cap \mcGo) \subseteq \mcG \setminus \mcGo$ which is compact open.  Then, because~$\mcG$ is effective, and by \cite[Lem.~3.1]{bcfs:soaateg}, there is a nonempty open subset $W \subseteq (V_{j_0} \cap \mcGo) \setminus (\bigcup_{i=1}^n U_i \cap \mcGo)$ such that $W K W = \varnothing$.  Since~$\mcGo$ has a base of compact open sets, we may assume that~$W$ is compact.  We have
\begin{align*}
1_W \ast \vartheta \ast 1_W &= \sum_{i=1}^n s_i 1_{W (U_i \setminus (U_i \cap \mcGo)) W} + s_i 1_{W \cap (\bigcup_{i=1}^n U_i \cap \mcGo) \cap W} = 0 , \\
1_W \ast \psi \ast 1_W &= \sum_{j=1}^m t_j 1_{W (V_j \setminus (V_j \cap \mcGo)) W} + t_j 1_{W \cap (\bigcup_{j=1}^m V_j \cap \mcGo) \cap W} = t_{j_0} 1_W .
\end{align*}
Since $(\psi, \vartheta) \in \rho$, and~$\rho$ is a congruence on $A_S(\mcG)$, we obtain that
\[ (t_{j_0} 1_W, 0) = (1_W \ast \psi \ast 1_W ,\, 1_W \ast \vartheta \ast 1_W) \in \rho \] with $t_{j_0} \in S \!\setminus\! \{ 0 \}$, thus finishing the proof.
\end{proof}

The following result, being an $\B$-algebra analog of \cite[Prop.~4.5]{bcfs:soaateg} and \cite[Prop.~3.4]{s:spasoegawatisa}, provides a criterion for minimal
Hausdorff ample groupoids.  It plays an important role in the proof of the subsequent main results (Theorems~\ref{Nec-suffcondtheo} and~\ref{con-sim-gragrouSteinAlg}).

\begin{lem}\label{minimalcriterion}
A Hausdorff ample groupoid~$\mcG$ is minimal if and only if~$1_V$ generates $A_{\B}(\mcG)$ as an ideal for all nonempty compact open subsets~$V$ of~$\mcGo$.
\end{lem}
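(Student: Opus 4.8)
The plan is to work throughout with the Boolean identification recorded after Proposition~\ref{convoprod}: elements of $A_{\B}(\mcG)$ correspond to compact open subsets of~$\mcG$, addition is union, multiplication is the groupoid product $1_U * 1_V = 1_{UV}$, and the natural order is inclusion. Since $\{1_K \mid K \subseteq \mcGo \text{ compact open}\}$ is a set of local units by Proposition~\ref{unitalSteinAlg}(2), the two-sided ideal generated by~$1_V$ is exactly the set of finite sums $I(1_V) = \{\sum_k 1_{A_k} * 1_V * 1_{B_k}\}$; and by Lemma~\ref{expresslem}(2) one has $I(1_V) = A_{\B}(\mcG)$ if and only if $1_B \in I(1_V)$ for every compact open bisection~$B$.

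For the forward implication, assume~$\mcG$ is minimal and fix a nonempty compact open $V \subseteq \mcGo$. First I would check that the orbit saturation $\overline V \defeq r(\mcG_V) = \{ r(\gamma) \mid s(\gamma) \in V \}$ is an \emph{open invariant} subset of~$\mcGo$: it is open because $r$ is a local homeomorphism (hence open) and $\mcG_V = s^{-1}(V)$ is open, and it is invariant by concatenating composable morphisms. As $\varnothing \ne V \subseteq \overline V$, minimality forces $\overline V = \mcGo$. Next, for each $u \in \mcGo$ choose $\gamma$ with $s(\gamma) \in V$ and $r(\gamma) = u$, and a compact open bisection $B \ni \gamma$ shrunk so that $s(B) \subseteq V$; then $B V = B$, so
\[ 1_B * 1_V * 1_{B^{-1}} = 1_{B} * 1_{B^{-1}} = 1_{B B^{-1}} = 1_{r(B)} \in I(1_V) , \]
a function supported on the open neighbourhood $r(B) \ni u$ in~$\mcGo$. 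Given a compact open $K \subseteq \mcGo$, I cover~$K$ by finitely many such $r(B_1), \dots, r(B_p)$ and use $1_K = 1_K * \big(\sum_{i} 1_{r(B_i)}\big) \in I(1_V)$. Finally $1_B = 1_B * 1_{s(B)} \in I(1_V)$ for every bisection~$B$, whence $I(1_V) = A_{\B}(\mcG)$.

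For the converse I argue by contraposition. Suppose~$\mcG$ is not minimal, so there is a nontrivial open invariant set $V_0 \subseteq \mcGo$; pick a nonempty compact open $V \subseteq V_0$. The decisive step is a support computation: for compact open $A, B$ any $\gamma \in \on{supp}(1_A * 1_V * 1_B) \subseteq A V B$ factors as $\gamma = \alpha\beta\delta$ with $\beta \in V$ a unit, which forces $s(\alpha) = \beta \in V_0$ and $r(\delta) = \beta \in V_0$; invariance of~$V_0$ (which is symmetric in~$s$ and~$r$) then yields $r(\gamma) = r(\alpha) \in V_0$ and $s(\gamma) = s(\delta) \in V_0$. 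Thus every element of $I(1_V)$ is supported in $\{ \gamma \in \mcG \mid s(\gamma), r(\gamma) \in V_0 \}$. Choosing $w \in \mcGo \setminus V_0$ and a compact open $K \subseteq \mcGo$ with $w \in K$, the function $1_K$ is supported at the unit $w \notin V_0$, so $1_K \notin I(1_V)$ and $1_V$ fails to generate $A_{\B}(\mcG)$ as an ideal.

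I expect the main obstacle to be the support tracking in the converse: one must carefully verify that \emph{both} the source and the range of every element in the support of $1_A * 1_V * 1_B$ remain inside~$V_0$, which is where the full (two-sided) invariance of~$V_0$ is essential; the forward direction is then comparatively routine, resting on the openness of the orbit saturation together with the local-units covering argument.
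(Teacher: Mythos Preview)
Your proof is correct. The forward direction is essentially the paper's argument with the roles of $s$ and $r$ interchanged: the paper works with the invariant open set $s(\mcG^V)$ and bisections $B_u$ with $r(B_u)\subseteq V$, obtaining $1_{s(B_u)}=1_{B_u^{-1}}*1_V*1_{B_u}$, whereas you use $r(\mcG_V)$ and bisections with $s(B)\subseteq V$ to get $1_{r(B)}=1_B*1_V*1_{B^{-1}}$; the covering and local-units steps are then identical.

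The converse, however, follows a genuinely different route. The paper invokes the restriction groupoid $\mcG_D$ with $D=\mcGo\setminus V_0$ (established in the proof of Proposition~\ref{Neccondprop}) and the induced $\B$-algebra homomorphism $\phi\colon A_{\B}(\mcG)\to A_{\B}(\mcG_D)$, $1_B\mapsto 1_{B\cap\mcG_D}$; since $V\subseteq V_0$ gives $\phi(1_V)=0$ while $\phi$ is nonzero, $1_V$ lies in the proper ideal $\mathrm{Ker}(\phi)$. Your argument avoids constructing this auxiliary Steinberg algebra and instead tracks supports directly, showing that every element of $I(1_V)$ is supported in $\mcG|_{V_0}$ via the factorisation $\gamma=\alpha\beta\delta$ with $\beta\in V$ a unit and invariance of~$V_0$. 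Your approach is more elementary and self-contained (it uses nothing beyond the Boolean identification and the definition of invariance), while the paper's approach has the advantage of reusing a homomorphism that was already set up for Proposition~\ref{Neccondprop} and makes the ideal $\mathrm{Ker}(\phi)$ explicit for later use.
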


\begin{proof}
($\Longrightarrow$). Assume that~$\mcG$ is a minimal Hausdorff ample groupoid.  Let~$V$ be a nonempty compact open subset of~$\mcGo$, and~$I$ an ideal of $A_{\B}(\mcG)$ generated by~$1_V$.  If~$U$ is any compact open bisection of~$\mcG$, we must prove that $1_U \in I$.  Let $K \defeq r(U) \subseteq \mcGo$.  We then have that~$K$ is a compact open subset of~$\mcGo$.  Since $s(\mcG^V)$ is a nonempty open invariant set, and~$\mcG$ is minimal, $s(\mcG^V) = \mcGo$, and hence $K \subseteq s(\mcG^V)$.  Thus, for any $u \in K$, there exists an element $\alpha_u \in \mcG$ such that $s(\alpha_u) = u$ and $r(\alpha_u) \in V$.  For each $u \in K$, let $B_u$ be a compact open bisection of~$\mcG$ containing~$\alpha_u$ such that $r(B_u) \subseteq V$ and $s(B_u) \subseteq K$.  We then have $1_{s(B_u)} = 1_{B^{-1}_u} \ast 1_V \ast 1_{B_u} \in I$.  Since~$K$ is compact, there exists a finite subset $\{ u_1, \dots, u_n \}$ of~$K$ such that $\{ s(B_{u_i}) \mid i = 1, \dots, n \}$ covers~$K$.  By Lemma~\ref{expresslem} (2), $1_K = \sum_{i=1}^n 1_{s(B_{u_i})} \in I$, and so $1_U = 1_K \ast 1_U \in I$.  It follows that $I = A_{\B}(\mcG)$. \medskip

($\Longleftarrow$). Suppose that~$\mcG$ is not minimal.  Let~$U$ be a nontrivial open invariant subset of~$\mcGo$.  Let $D \defeq \mcGo \setminus V$.  We then have that~$\mcG_D$ coincides with the restriction
$\mcG|_{D} \defeq \{ \gamma \in \mcG \mid s(\gamma), \, r(\gamma) \in D\}$
of~$\mcG$ to~$D$.  As was shown in the proof of Proposition~\ref{Neccondprop}, $\mcG|_{D}$ is a Hausdorff ample groupoid with unit space~$D$, and there is a nonzero $\B$-algebra homomorphism $\phi \colon A_{\B}(\mcG) \longrightarrow A_{\B}(\mcG_D)$ such that $\phi(1_B) = 1_{B \cap \mcG_D}$ for any compact open bisection~$B$ of~$\mcG$.  This implies that $\on{Ker}(\phi) \defeq \phi^{-1}(0)$ is a proper ideal of $A_{\B}(\mcG)$.  Since~$U$ is a nontrivial open subset of~$\mcGo$, there exists a nonempty compact open subset~$V$ of~$\mcGo$ such that $V \subseteq U$.  We then have $\phi(1_{V}) = 1_{V \cap \mcG_D} = 0$ (since $V \cap \mcG_D = \varnothing$), and so $1_V \in \on{Ker}(\phi) \setminus \{ 0 \}$, and hence, by our hypothesis, $\on{Ker}(\phi) = A_{\B}(\mcG)$, a contradiction, thus finishing the proof.
\end{proof}

We are now in position to provide the main result of this section, being a “semiring” analog of \cite[Th.~4.1]{bcfs:soaateg}, \cite[Th.~4.1, Cor.~4.6]{ce:utfsa} and \cite[Th.~3.5]{s:spasoegawatisa},
characterizing the congruence-simple Steinberg algebras of Hausdorff ample groupoids over commutative semirings.

\begin{thm}\label{Nec-suffcondtheo}
The Steinberg algebra $A_S(\mcG)$ of a Hausdorff ample groupoid~$\mcG$ over a commutative semiring~$S$ is congruence-simple if and only if the following conditions are satisfied:
\begin{enumerate}[\quad \upshape (1)]
\item $S$~is either a field or the Boolean semifield~$\B$;
\item $\mcG$~is both minimal and effective.
\end{enumerate}
\end{thm}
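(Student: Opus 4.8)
The plan is to prove the two implications separately, observing that the forward direction ($\Longrightarrow$) is already in hand: it is precisely the content of Proposition~\ref{Neccondprop}. So all the work lies in the converse, where I assume conditions (1) and (2) and deduce congruence-simpleness. Condition (1) forces a case split between $S$ being a field and $S = \B$, and the two cases demand rather different arguments.

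For the field case I would argue that $A_S(\mcG)$ is then an algebra over a field, hence a ring possessing local units by Proposition~\ref{unitalSteinAlg}~(2). Since a ring carries subtraction, its hemiring congruences are in bijection with its two-sided ideals via $\rho \mapsto \{ f \mid (f,0) \in \rho \}$, so that congruence-simpleness coincides with ring simpleness; moreover, the presence of local units makes every two-sided ring ideal automatically an $S$-ideal. I could then invoke the classical characterizations of \cite[Th.~4.1]{bcfs:soaateg}, \cite[Th.~4.1, Cor.~4.6]{ce:utfsa} and \cite[Th.~3.5]{s:spasoegawatisa}, which state exactly that $A_S(\mcG)$ is simple if and only if $\mcG$ is minimal and effective, to conclude this case.

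The substantive case is $S = \B$, and this is where the two preparatory lemmas are meant to be combined. Let $\rho$ be a congruence on $A_{\B}(\mcG)$ different from the diagonal. Since $\mcG$ is effective and $\B$ is additively idempotent, Lemma~\ref{cong-reduction} produces a nonempty compact open subset $W \subseteq \mcGo$ with $(1_W, 0) \in \rho$ (the scalar $s$ is forced to be $1$ in $\B$). The key structural step is then to show that $J \defeq \{ f \in A_{\B}(\mcG) \mid (f, 0) \in \rho \}$ is a two-sided ideal: closure under addition follows by translating $(f, 0) \in \rho$ by $g$ and applying transitivity, while absorption follows by multiplying the pair $(f, 0)$ on either side by an arbitrary element. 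Because $1_W \in J$ and $\mcG$ is minimal, Lemma~\ref{minimalcriterion} gives that $1_W$ generates $A_{\B}(\mcG)$ as an ideal, forcing $J = A_{\B}(\mcG)$. Hence $(f, 0) \in \rho$ for every $f$, and by symmetry and transitivity $(f, g) \in \rho$ for all $f, g$, so $\rho$ is the universal congruence. As $A_{\B}(\mcG) \ne 0$, this yields congruence-simpleness.

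I expect the main obstacle to be conceptual rather than computational, and to live entirely in the Boolean case: one must see that an arbitrary non-diagonal congruence can always be reduced to a single relation of the form $(1_W, 0)$ supported on the unit space, and that such a relation then propagates to the universal congruence. Lemma~\ref{cong-reduction} supplies the first half (this is where effectiveness enters), and the passage through the ideal $J$ together with Lemma~\ref{minimalcriterion} supplies the second half (this is where minimality enters). The one genuinely delicate point to verify with care is that $J$ really is an ideal, since without subtraction one cannot argue as in the ring case and must instead lean directly on the idempotent addition of $\B$ and the congruence axioms.
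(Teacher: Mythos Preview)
Your proposal is correct and follows essentially the same route as the paper: the forward direction is Proposition~\ref{Neccondprop}, the field case is delegated to \cite[Th.~3.5]{s:spasoegawatisa}, and the Boolean case combines Lemma~\ref{cong-reduction} with Lemma~\ref{minimalcriterion} via the ideal $J = \{ f \mid (f,0) \in \rho \}$, exactly as the paper does. Your only additions are the explicit justification that congruence-simpleness coincides with ring simpleness in the field case and the verification that $J$ is an ideal, both of which the paper leaves implicit.
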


\begin{proof}
($\Longrightarrow$). It follows from Proposition~\ref{Neccondprop}.

($\Longleftarrow$). If~$S$ is a field, then the statement follows from \cite[Th.~3.5]{s:spasoegawatisa}.  Consider the case when $S = \B$, and let~$\rho$ be a congruence on $A_{\B}(\mcG)$ which is different from the diagonal congruence.  Since~$\mcG$ is effective, by Lemma~\ref{cong-reduction}, there exists a nonempty compact open subset~$U$ of~$\mcGo$ such that $(1_U, 0) \in \rho$.
Let us consider the ideal of $A_{\B}(\mcG)$ defined as follows: \[ I \defeq \{ f \in A_{\B}(\mcG) \mid (f, 0) \in \rho \} . \]
From the observation above, $I$~contains a nonzero element~$1_U$ with $\on{supp}(1_U) = U \subseteq \mcGo$.  By Lemma~\ref{minimalcriterion}, $I = A_{\B}(\mcG)$.  It immediately follows that $\rho = A_{\B}(\mcG)^2$, whence $A_{\B}(\mcG)$ is congruence-simple, thus finishing the proof.
\end{proof} \pagebreak

\section{Steinberg algebras of graph groupoids}\label{sec:graph}

In this section we investigate Steinberg algebras $A_{S}(\mcG_E)$ of graph groupoids~$\mcG_E$ associated to arbitrary directed graphs~$E$, over a commutative semiring~$S$. We provide a complete characterization of the congruence-simple Steinberg algebras $A_S(\mcG_E)$ (Theorem~\ref{con-sim-gragrouSteinAlg}). Motivated by Clark and Sims's result \cite[Ex.~3.2]{cs:eghmesa}, we present a criterion for the natural homomorphism from the Leavitt path algebra $L_{\B}(E)$ to the Steinberg algebra $A_{\B}(\mcG_E)$
to be an isomorphism, where~$\B$ is the Boolean semifield (Theorem~\ref{LPAs are Steinberg-Alg}). In order to do so, we establish Uniqueness Theorems for Leavitt path algebras of row-finite graphs over $\B$ (Corollaries~\ref{Uniqueness Thm} and \ref{Cuntz-Krieger UniThm}). Furthermore, we argue that the Leavitt path algebra $L_{\B}(E)$ is in general not isomorphic to the Steinberg algebra $A_{\B}(\mcG_E)$ (Example~\ref{LPAs are not Steinberg-Alg}). All these constructions are crucially based on some general notions of graph theory, which for the reader's convenience we reproduce here.

A (directed) graph $E = (E^0, E^1, s, r)$ consists of two disjoint sets~$E^0$ and~$E^1$, called \emph{vertices} and \emph{edges} respectively, together with two maps $s, r \colon E^1 \longrightarrow E^0$. The vertices $s(e)$ and $r(e)$ are referred to as the \emph{source} and the \emph{range}
of the edge~$e$, respectively. A graph~$E$ is called \emph{row-finite} if $|s^{-1}(v)| < \infty$ for all $v \in E^0$.
A vertex~$v$ for which $s^{-1}(v)$ is empty is called a \emph{sink}; a vertex~$v$ is \emph{regular} if $0 < |s^{-1}(v)| < \infty$; and a vertex~$v$ is an \textit{infinite emitter} if $|s^{-1}(v)| = \infty$.

A \emph{path} in a graph~$E$ is a sequence $p = e_1 \dots e_n$  of edges $e_1, \dots, e_n$ with $r(e_i) = s(e_{i+1})$ for all $1 \le i \le n \!-\! 1$. We then say that the path~$p$ starts at the vertex $s(p) \defeq s(e_1)$, ends at the vertex $r(p) \defeq r(e_n)$, and write $|p| \defeq n$ for its length. The vertices in~$E^0$ are considered to be paths of length~$0$. We denote by~$E^{*}$ the set of all paths in $E$.
A path $p$ of positive length is a \textit{closed path} based at the vertex $v$ if $s(p) = r(p) =v$.
 A \textit{cycle} based at~$v$ is a closed path $p = e_1 \dots e_n$ based at~$v$ for which the vertices $s(e_1), \dots, s(e_n)$ are distinct.
An \textit{infinite path} in~$E$ is an infinite sequence $p = e_1 \dots e_n \dots$ of edges in~$E$ such that $r(e_i) = s(e_{i+1})$ for all $i \ge 1$. In this case, we say that the infinite path $p$ starts at the vertex $s(p) \defeq s(e_1)$. We denote by $E^{\infty}$ the set of all infinite paths in~$E$.

The following construction of a groupoid $\mcG_E$ from an arbitrary graph $E$ can be found in \cite[Ex.~2.1]{cs:eghmesa}.
Let $E = (E^0, E^1, r, s)$ be a graph. First let
\[ X_E \defeq \{ p \in E^* \mid r(p) \text{ is a sink or an infinite emitter} \} \cup E^{\infty} . \]
Then the \emph{graph groupoid} associated to~$E$ is defined as
\[ \mcG_E \defeq \{ (\alpha x, |\alpha| \!-\! |\beta|, \beta x) \mid \alpha, \beta \in E^* ,\, x \in X_E ,\, r(\alpha) = s(x) = r(\beta) \} . \]
The formulas $(x, k, y) (y, l, z) = (x, k \!+\! l, z)$ and $(x, k, y)^{-1} = (y, -k, x)$ define composition and inverse maps on $\mcG_E$, making it a groupoid with unit space $\mcG_E^{(0)} = \{( x, 0, x) \mid x \in X_E \}$, which we may identify with the set $X_E$.
Note that the range and source maps $r_{\mcG_E}, s_{\mcG_E} \colon \mcG_E\longrightarrow \mcG_E^{(0)}$ are defined by $r_{\mcG_E}(x, k, y) = (x, 0, x)$ and $s_{\mcG_E}(x, k, y) = (y, 0, y)$, so that we may view each $(x, k, y) \in \mcG_E$ as a morphism with range~$x$ and source~$y$.

We next describe the topology on~$\mcG_E$. For $\alpha, \beta \in E^*$ with $r(\alpha) = r(\beta)$, and a finite subset $F \subseteq s^{-1}(r(\alpha))$, we let
\begin{gather*}
  Z(\alpha, \beta) \defeq \{ (\alpha x, |\alpha| \!-\! |\beta|, \beta x) \mid x\in X_E ,\, r(\alpha) = s(x) = r(\beta) \} \subseteq \mcG_E , \\
  Z(\alpha, \beta, F) \defeq Z(\alpha, \beta) \setminus \bigcup_{e \in F} Z(\alpha e, \beta e) .
\end{gather*}
The sets $Z(\alpha, \alpha, F)$ constitute a base of compact open sets for a locally compact Hausdorff topology on $\mcG_E^{(0)}$ (refer to \cite[Th.~2.1]{w:tpsoadg}, \cite[Th.~2.1]{r:tgatlpa} or \cite[Cor.~2.8]{adn:rcolpastawcctgca}).
And the sets $Z(\alpha, \beta, F)$ constitute a base of compact open bisections for a topology under which
$\mcG_E$ is a Hausdorff ample groupoid (refer to \cite[Sec.~2.3]{bcw:gaaoe} or \cite[Th.~2.4]{r:tgatlpa}).  Thus we may form the Steinberg algebra $A_S(\mcG_E)$. We should note the following properties.

\begin{rem}\label{graphgroupSteinAlg}
Let~$E$ be an arbitrary graph and $S$ a commutative semiring.
\begin{enumerate}
\item The multiplication on $A_S(\mcG_E)$ satisfies the following:
\begin{enumerate}[(i)]
\item $1_{Z(v, v)}\ast 1_{Z(w, w)} = \delta_{v, w}1_{Z(v,v)}$ for all $v, w\in E^0$;
\item $1_{Z(s(e), s(e))}\ast 1_{Z(e, r(e))} = 1_{Z(e, r(e))} = 1_{Z(e, r(e))} \ast 1_{Z(r(e), r(e))}$ for $e \in E^1$;
\item $1_{Z(r(e), r(e))} \ast 1_{Z(r(e), e)} = 1_{Z(r(e), e)} = 1_{Z(r(e), e)} \ast 1_{Z(s(e), s(e))}$ for $e \in E^1$;
\item $1_{Z(r(e), e)}\ast 1_{Z(f, r(f))} = \delta_{e, f}1_{Z(r(e), r(e))}$ for all $e, f\in E^1$;
\item $1_{Z(v, v)} = \sum_{e\in s^{-1}(v)}1_{Z(e, r(e))}\ast 1_{Z(r(e), e)}$ for all regular vertices $v \in E^0$;
\end{enumerate}
where $\delta$ is the Kronecker delta.
\item If in addition~$S$ is additively idempotent, then $A_S(\mcG_E)$ is generated by functions $1_{Z(\alpha, \beta, F)}$, where $\alpha, \beta \in E^*$ with $r(\alpha) = r(\beta)$, and $F \subseteq s^{-1}(r(\alpha))$ is finite. The finite sums of distinct elements of $\{ 1_{Z(v, v)} \mid v \in E^0 \}$ form a set of local units of the $S$-algebra $A_S(\mcG_E)$.
\end{enumerate}
\end{rem}

\begin{proof}
(1) It is straightforward by using Proposition~\ref{convoprod}\,(2).

(2) Lemma~\ref{expresslem}\,(2) immediately implies the first part, from which we easily deduce the remaining part using Proposition~\ref{convoprod}\,(2).
\end{proof}

Our subsequent aim is to characterize the congruence-simpleness of the Steinberg algebra $A_S(\mcG_E)$ over a commutative semiring~$S$. Before doing so, we need some notations and facts.
Let~$E$ be an arbitrary graph. An edge~$f$ is an \emph{exit} for a path $p = e_1 \dots e_n$ if $s(f) = s(e_i)$ but $f \ne e_i$ for some $1 \le i \le n$. A subset~$H$ of~$E^0$ is called \textit{hereditary} if $s(e) \in H$ implies $r(e) \in H$ for all $e \in E^1$. And~$H$ is called \textit{saturated} if whenever~$v$ is a regular vertex in~$E^0$ with the property that $r(s^{-1}(v)) \subseteq H$, then $v \in H$.

The following fact provides us with a criterion for the groupoid~$\mcG_E$ to be effective and minimal.

\begin{prop}\label{eff-minigraphgroup}
Let~$E$ be an arbitrary graph.
\begin{enumerate}[\quad \upshape (1)]
\item $\mcG_E$ is effective if and only if every cycle in $E$ has an exit;
\item $\mcG_E$ is minimal if and only if the only hereditary and saturated subsets of~$E^0$ are $\varnothing$ and $E^0$.
\end{enumerate}
\end{prop}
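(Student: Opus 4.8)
The plan is to translate both properties into the combinatorics of paths, using the explicit description of~$\mcG_E$ and its base $\{Z(\alpha,\beta,F)\}$ of compact open bisections. The computation underlying everything is that a morphism $(\alpha x,|\alpha|-|\beta|,\beta x)$ has source $\beta x$ and range $\alpha x$, so it lies in $\on{Iso}(\mcG_E)\setminus\mcG_E^{(0)}$ exactly when $\alpha x=\beta x$ but $|\alpha|\neq|\beta|$; assuming $|\alpha|>|\beta|$, comparing the two paths forces $\beta$ to be a prefix of~$\alpha$, say $\alpha=\beta\mu$, and then $\mu x=x$, i.e.\ $x=\mu^{\infty}$ with~$\mu$ a closed path based at $v\defeq r(\beta)$. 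Dually, a subset $D\subseteq X_E$ is invariant precisely when it is closed under shift--tail equivalence $\beta x\sim\alpha x$; I will record this together with the shifting identities $(z,-|\alpha|,\alpha z)$ that move along an orbit, since these are the only tools needed to pass between an open invariant set~$U$ and a vertex set.

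For~(1) I argue both implications contrapositively. If a cycle~$c$ based at~$v$ has no exit, then every vertex of~$c$ emits a single edge, so the only element of~$X_E$ with source~$v$ is $c^{\infty}$; hence $Z(c,v)=\{(c^{\infty},|c|,c^{\infty})\}$ is a nonempty open subset of $\on{Iso}(\mcG_E)\setminus\mcG_E^{(0)}$ and $\mcG_E$ is not effective. Conversely, if $\mcG_E$ is not effective, I pick a nonempty basic $Z(\alpha,\beta,F)\subseteq\on{Iso}(\mcG_E)\setminus\mcG_E^{(0)}$; by the computation above it yields a closed path~$\mu$ with $\alpha=\beta\mu$, and I extract a genuine cycle~$c'$ inside~$\mu$. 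If~$c'$ had an exit $f\neq c'_j$ at a vertex occurring in~$\mu$, I would build $y\in X_E$ that runs along $\mu^{\infty}$ for at least one full period and then takes~$f$, so that~$y$ agrees with $\mu^{\infty}$ on a prefix long enough to keep its first edge out of~$F$ (and to keep $s(y)=v$), yet $y\neq\mu^{\infty}$; then $(\alpha y,\,|\alpha|-|\beta|,\,\beta y)\in Z(\alpha,\beta,F)$ fails $\alpha y=\beta y$, contradicting containment in the isotropy, so~$c'$ has no exit. The delicate point here is exactly this prefix--matching: the alternative path must respect the finite removal~$F$ and the source constraint, which is why one deviates only after traversing a full period of~$\mu$.

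For~(2) I set up the two order maps $H\mapsto U_H\defeq\{x\in X_E : x\text{ meets a vertex of }H\}$ and $U\mapsto H_U\defeq\{v\in E^0 : Z(v)\subseteq U\}$, where $Z(v)\defeq\{x\in X_E : s(x)=v\}$ is the identification of $Z(v,v)$. Using hereditariness one sees $U_H=\bigcup_{r(\eta)\in H}Z(\eta,\eta)$ is open, and using the shift identities that it is invariant; it is nonempty because $H\neq\varnothing$, and proper because for $w\notin H$ saturation lets me grow an element of~$X_E$ from~$w$ that never enters~$H$ (at a regular vertex not in~$H$ some edge must leave~$H^{c}$, else saturation would force the vertex into~$H$). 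Thus a nontrivial~$H$ yields a nontrivial open invariant set, giving one implication. For the converse I verify, again by shifting, that $H_U$ is hereditary and saturated, and that it is proper ($\neq E^0$), since $H_U=E^0$ would force $U=X_E$.

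The main obstacle is the remaining nontriviality $H_U\neq\varnothing$, because a basic neighbourhood of a point of~$U$ only controls~$U$ up to a \emph{finite} exclusion~$F$, and at an infinite emitter no single cylinder $Z(w)$ need lie in~$U$. I resolve this as follows: from $x\in U$ and openness I first obtain $Z(w,w,F)\subseteq U$ for $w=r(\alpha)$ and finite $F\subseteq s^{-1}(w)$ (by shifting the neighbourhood $Z(\alpha,\alpha,F)$ down by~$\alpha$), then choose any edge $e\in s^{-1}(w)\setminus F$ --- which exists because the neighbourhood is nonempty, and is automatic when~$w$ is an infinite emitter --- and observe $e\,Z(r(e))\subseteq Z(w,w,F)\subseteq U$; a single application of invariance gives $Z(r(e))\subseteq U$, so $r(e)\in H_U$ and $H_U\neq\varnothing$. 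Combining the two maps yields the stated equivalence; I expect the bookkeeping around~$F$ in both parts, rather than any conceptual difficulty, to be the step needing the most care.
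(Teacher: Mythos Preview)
Your argument is essentially correct, but the two parts relate to the paper's proof in different ways.

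For part~(1) you run the paper's argument contrapositively: where the paper assumes every cycle has an exit and produces, for any distinct $\alpha,\beta$, a tail $x$ with $\alpha x\neq\beta x$, you instead start from a basic bisection inside $\on{Iso}(\mcG_E)\setminus\mcG_E^{(0)}$, extract the closed path~$\mu$, and argue that any exit of a cycle inside~$\mu$ would furnish a deviating tail. The content is the same; your version has the minor advantage of tracking the finite exclusion set~$F$ explicitly, which the paper glosses over by passing to a smaller cylinder.

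For part~(2) your approach is genuinely different. The paper proves the implication ``only trivial hereditary saturated sets $\Rightarrow$ minimal'' \emph{algebraically}, via Lemma~\ref{minimalcriterion}: it shows that for every nonempty compact open $V\subseteq\mcG_E^{(0)}$ the ideal generated by $1_V$ in $A_{\B}(\mcG_E)$ is everything, by building the hereditary saturated set $H=\{v:1_{Z(v,v)}\in I\}$. You instead stay purely at the level of the groupoid, setting up the pair of order maps $H\mapsto U_H$ and $U\mapsto H_U$ and showing each sends nontrivial to nontrivial. This is more elementary (no Steinberg algebra needed) and makes the correspondence between invariant opens and hereditary saturated sets transparent; the paper's route, on the other hand, fits naturally with the algebraic machinery developed in Section~\ref{sec:simple}.

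One small oversight: in your $H_U\neq\varnothing$ step you assert that an edge $e\in s^{-1}(w)\setminus F$ exists ``because the neighbourhood is nonempty.'' This fails when $w$ is a sink, since then $s^{-1}(w)=\varnothing$ while $Z(w,w,\varnothing)=\{w\}$ is still nonempty. The fix is immediate---in that case $Z(w)=\{w\}\subseteq U$ directly, so $w\in H_U$---but you should say so.
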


\begin{proof}
(1) It may be found in \cite[Prop.~2.21]{r:tgatlpa}; and just for the reader's convenience, we reproduce it here. 

($\Longrightarrow$). Suppose that~$E$ has a cycle~$c$ without an exit. We then have that $Z(cc, c) = \{ (ccc \dots, |c|, cc \dots) \} \subseteq \on{Iso}(\mcG_E) \setminus \mcG_E^{(0)}$, and thus~$\mcG_E$ is not effective.

($\Longleftarrow$). Assume that every cycle in~$E$ has an exit, and let $\alpha, \beta\in E^*$ be distinct paths with $r(\alpha) = r(\beta)$. We claim that $\alpha x \ne \beta x$ for some $x \in X_E$.
Indeed, suppose that $\alpha x = \beta x$ for some $x \in X_E$. Then $x \in E^{\infty}$ and one of~$\alpha$ and~$\beta$ is a prefix of the other. We thus may assume that $\beta = \alpha \gamma$ for some closed path $\gamma = e_1 \dots e_k$ based at $r(\alpha) = r(\beta)$.
By our hypothesis and \cite[Lem.~2.5]{ap:tlpaoag05}, the path~$\gamma$ has an exit $f$, \textit{i.e.}, there is $1\le i\le k$ such that $f\ne e_i$ and $s(f) = s(e_i)$. Let~$y$ be an arbitrary element in $X_E$ with $s(y) = r(f)$, then putting $z \defeq e_1 \dots e_{i-1} f y \in X_E$ we find that $\alpha z \ne \beta z$. In the other case, we let~$x$ be an arbitrary element in~$X_E$ with $r(\alpha) = s(x) = r(\beta)$ and have that $\alpha x \ne \beta x$, thus proving the claim.
Hence, there is an element $(\alpha x, |\alpha| \!-\! |\beta|, \beta x) \in Z(\alpha, \beta)$ such that $\alpha x \ne \beta x$. This implies that every compact open bisection $B \subseteq \mcG_E \setminus \mcG_E^{(0)}$ contains an element~$\mu$ such that $s(\mu) \ne r(\mu)$, and so~$\mcG_E$ is effective by \cite[Lem.~3.1]{bcfs:soaateg}. \medskip

(2) ($\Longrightarrow$). Assume that~$\mcG_E$ is minimal, and let~$H$ be a nonempty hereditary and saturated subset of~$E^0$. We prove that $H = E^0$. To this end, let $Z \defeq \bigcup_{v\in H}Z(v, v) \subseteq \mcG_E^{(0)}$. Then $U \defeq s_{\mcG_E}(\mcG_E^Z) = s_{\mcG_E}(r_{\mcG_E}^{-1}(Z))$ is a nonempty open invariant subset of $\mcG_E^{(0)}$ (note that $\varnothing \ne Z\subseteq U$). Since~$\mcG$ is minimal, $U = \mcG_E^{(0)}$.

Suppose that $H\ne E^0$, and let $v \in E^0 \setminus H$. Consider the following cases.

\textit{Case~1:} The vertex~$v$ is a sink or an infinite emitter. Then $(v, 0, v) \in \mcG_E^{(0)} = U$, so there is an element $(y, k, x) \in \mcG_E$ such that $(x, 0, x) = s_{\mcG_E}(y, k, x) = (v, 0, v)$ and $(y, 0, y) = r_{\mcG_E}(y, k, x) \in Z$. This implies that $x = v$ and $y \in E^*$ with $r(y) = v$ and $s(y) \in H$. Since~$H$ is hereditary and $s(y) \in H$, we have $v = r(y) \in H$, a contradiction.

\textit{Case~2:} The vertex~$v$ is regular. Since~$H$ is saturated, there exists an edge $e_1 \in s^{-1}(v)$ such that $r(e_1) \notin H$. If $r(e_1)$ is either a sink or an infinite emitter, then we make a contradiction by repeating the argument described in Case~1, starting with $r(e_1)$, and so we may assume that $r(e_1)$ is a regular vertex.
Continuing this process, we obtain the following possible cases. Either, we arrive after~$n$ steps at a path $p = e_1 \dots e_n$ with $s(p) = v$ and $r(p) \notin H$ is either a sink or an infinite emitter. Then, we produce a contradiction by repeating the argument described in Case~1, starting with $r(p)$.
In the other case, we obtain an infinite path $x = e_1 \dots e_n \dots$ such that $s(x) = v$ and $r(e_n) \notin H$ for all $n \ge 1$. We then have that $(x, 0, x) \in \mcG_E^{(0)}$, \textit{i.e.}, $(x, 0, x) \in U$, and so there exists an element $(z, k, y) \in \mcG_E$ such that $(y, 0, y) = s_{\mcG_E}(z, k, y) = (x, 0, x)$ and $(z, 0, z) = r_{\mcG_E}(z, k, y)\in Z$.
This implies that $y = x$ and $s(z) \in H$. Since $(z, k, x) = (z, k, y) \in \mcG_E$, there exist an integer $n \ge 2$ and a path $\alpha \in E^*$ such that $z = \alpha e_n e_{n+1} \ldots \in E^{\infty}$. Because~$H$ is hereditary, and $s(\alpha) = s(z) \in H$, we find that $r(e_{n-1}) = s(e_n) = r(\alpha) \in H$, which is a contradiction.

In any case, we arrive at a contradiction, and so we infer that $H = E^0$.

($\Longleftarrow$). Suppose that~$E^0$ has only the trivial hereditary and saturated subsets. Let~$V$ be a nonempty compact open subset of $\mcG_E^{(0)}$, and let~$I$ be the ideal of $A_{\B}(\mcG_E)$ generated by~$1_V$. We claim that $I = A_{\B}(\mcG_E)$.
Indeed, since~$V$ is open in $\mcG_E^{(0)}$, there exist a path $\alpha \in E^*$ and a finite subset $F \subseteq s^{-1}(r(\alpha))$ such that $\varnothing \ne Z(\alpha, \alpha, F) \subseteq V$. As $Z(\alpha, \alpha, F)$ is non-empty, there is a path $\beta \in E^*$ that has~$\alpha$ as a prefix path and such that $Z(\beta, \beta) \subseteq Z(\alpha, \alpha, F) \subseteq V$.
Since~$I$ is an ideal of $A_{\B}(\mcG_E)$ and by Proposition~\ref{convoprod}\,(2), we then have $1_{Z(\beta, \beta)} = 1_{Z(\beta, \beta) \cap V} = 1_{Z(\beta, \beta)} \ast 1_V \in I$.
It follows that \[ 1_{Z(r(\beta), r(\beta))} = 1_{Z(r(\beta), \beta) Z(\beta, \beta) Z(\beta, r(\beta))} = 1_{Z(r(\beta), \beta)} \ast 1_{Z(\beta, \beta)} \ast 1_{Z(\beta, r(\beta))} \in I . \]

Let $H \defeq \{ v \in E^0 \mid 1_{Z(v, v)} \in I\}$. We have that $r(\beta) \in H$, and hence $H \ne \varnothing$. Let $e \in E^1$ with $s(e) \in H$. Using Remark~\ref{graphgroupSteinAlg}\,(1), we obtain that \[ 1_{Z(r(e), r(e))} = 1_{Z(r(e), e)} \ast 1_{Z(s(e), s(e))} \ast 1_{Z(e, r(e))} \in I , \] hence $r(e) \in H$, showing that~$H$ is hereditary.

Let~$v$ be a regular vertex such that $r(e) \in H$ for all $e \in s^{-1}(v)$. We then have $1_{Z(r(e), r(e))} \in I$ for all $e \in s^{-1}(v)$, and thus, by Remark~\ref{graphgroupSteinAlg}\,(1), $1_{Z(e, r(e))} = 1_{Z(e, r(e))} \ast 1_{Z(r(e), r(e))}\in I$ for all $e\in s^{-1}(v)$, and
\[ 1_{Z(v, v)} = \sum_{e \in s^{-1}(v)} 1_{Z(e, r(e))} \ast 1_{Z(r(e), e)} \in I . \]
This implies that $v \in H$, and hence~$H$ is saturated. By our hypothesis, $H = E^0$, and hence $1_{Z(v, v)}\in H$ for all $v \in E^0$, whence $I = A_{\B}(\mcG_E)$ by Remark~\ref{graphgroupSteinAlg}\,(2), proving the claim. By Lemma~\ref{minimalcriterion}, $\mcG_E$ is minimal, thus finishing the proof.
\end{proof}

Combining Theorem~\ref{Nec-suffcondtheo} and Proposition~\ref{eff-minigraphgroup}, we readily obtain a complete characterization of the congruence-simple Steinberg algebras $A_S(\mcG_E)$ over commutative semirings.

\begin{thm}\label{con-sim-gragrouSteinAlg}
Let~$E$ be an arbitrary graph and~$S$ a commutative semiring. Then, $A_S(\mcG_E)$ is congruence-simple if and only if the following conditions hold:
\begin{enumerate}[\quad \upshape (1)]
\item $S$ is either a field, or the Boolean semifield~$\B$;
\item The only hereditary and saturated subset of~$E^0$ are~$\varnothing$ and~$E^0$;
\item Every cycle in~$E$ has an exit.
\end{enumerate}
\end{thm}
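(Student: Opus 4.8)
The plan is to recognize this statement as a direct specialization of the general characterization already obtained, with the abstract groupoid hypotheses translated into graph-theoretic ones. First I would recall that, for an arbitrary graph~$E$, the construction preceding the theorem establishes that $\mcG_E$, equipped with the topology generated by the sets $Z(\alpha, \beta, F)$, is a Hausdorff ample groupoid. This is precisely the standing hypothesis under which Theorem~\ref{Nec-suffcondtheo} operates, so I may apply that theorem verbatim with $\mcG = \mcG_E$: the Steinberg algebra $A_S(\mcG_E)$ is congruence-simple if and only if $S$ is either a field or the Boolean semifield~$\B$, and $\mcG_E$ is simultaneously minimal and effective.

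Next I would translate the groupoid conditions into the combinatorial ones using Proposition~\ref{eff-minigraphgroup}. By part~(2) of that proposition, minimality of $\mcG_E$ is equivalent to the assertion that the only hereditary and saturated subsets of~$E^0$ are $\varnothing$ and~$E^0$, which is exactly condition~(2) of the present theorem. By part~(1), effectiveness of $\mcG_E$ is equivalent to the assertion that every cycle in~$E$ has an exit, which is condition~(3). Substituting these two equivalences into the criterion ``$\mcG_E$ is minimal and effective'' from Theorem~\ref{Nec-suffcondtheo}, while leaving condition~(1) on~$S$ unchanged, yields precisely the conjunction of conditions~(1)--(3), and the proof is complete.

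I do not anticipate any genuine obstacle, as all the substantive content has been absorbed into the two cited results: Theorem~\ref{Nec-suffcondtheo} supplies the hard direction relating congruence-simpleness to minimality and effectiveness (via the reduction Lemma~\ref{cong-reduction} and the ideal criterion Lemma~\ref{minimalcriterion}), while Proposition~\ref{eff-minigraphgroup} carries out the dictionary between groupoid properties and graph properties. The only point warranting explicit mention in the write-up is that $\mcG_E$ genuinely satisfies the Hausdorff ample hypothesis, so that Theorem~\ref{Nec-suffcondtheo} is applicable; this is already recorded in the construction of~$\mcG_E$ and requires no further argument. Consequently the proof reduces to the single sentence invoking the combination of these two results, as the statement itself announces.
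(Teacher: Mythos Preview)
Your proposal is correct and matches the paper's own approach exactly: the paper states the theorem immediately after Proposition~\ref{eff-minigraphgroup} with no separate proof, simply noting that it follows by combining Theorem~\ref{Nec-suffcondtheo} with Proposition~\ref{eff-minigraphgroup}. Your write-up is just a more explicit rendering of that one-line justification.
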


The remainder of this section is devoted to the study of the connection between Leavitt path algebras and Steinberg algebras. Let us first recall a brief history and the notion of Leavitt path algebras with coefficients in a commutative semiring. Given a row-finite graph~$E$ and any field~$K$, Abrams and Aranda Pino in~\cite{ap:tlpaoag05}, and independently Ara, Moreno, and Pardo in~\cite{amp:nktfga}, introduced the \emph{Leavitt path algebra} $L_K(E)$.
The definition was later generalized to all countable graphs by Abrams and Aranda Pino~\cite{ap:tlpaoag08}, and to all (possibly uncountable) graphs by Goodearl~\cite{g:lpaadl}. Then Tomforde in~\cite{t:lpawciacr} constructed Leavitt path algebras of graphs over a commutative ring, and Katsov and the present authors in~\cite{knz:solpawcias} introduced Leavitt path algebras with coefficients in a commutative semiring.
The notion of a Leavitt path algebra generalizes the algebras $L_K(1, n)$ constructed by Leavitt~\cite{leav:tmtoar} and also encompasses many other interesting classes of algebras. In addition, Leavitt path algebras are intimately related to graph $C^*$-algebras (see \cite{r:ga}).

\begin{defn}[{\cite[Def.~2.1]{knz:solpawcias}}]\label{lpaDef}
Let $E = (E^0, E^1, r, s)$ be an arbitrary graph and let~$S$ be a commutative semiring. The \emph{Leavitt path algebra} $L_S(E)$ of the graph~$E$ with coefficients in~$S$ is the $S$-algebra generated by the union of the set~$E^0$ and two disjoint copies of~$E^1$, say~$E^1$ and $\{ e^* \mid e \in E^1 \}$, satisfying the relations:
\begin{enumerate}
\item $v w = \delta_{v, w} v$ for all $v, w \in E^0$;
\item $s(e) e = e = e r(e)$ and $r(e) e^{\ast} = e^{\ast} = e^{\ast} s(e)$ for all $e \in E^1$;
\item $e^{\ast} f = \delta_{e, f} r(e)$ for all $e, f \in E^1$;
\item $v = \sum_{e \in s^{-1}(v)} e e^{\ast}$ whenever $v \in E^0$ is a regular vertex;
\end{enumerate}
where~$\delta$ is the  Kronecker delta.
\end{defn}

It is easy to see that the mappings given by $v \mapsto v$, for $v \in E^0$, and $e \mapsto e^{\ast}$, $e^{\ast} \mapsto e$ for $e\in E^1$, produce an involution on the algebra $L_S(E)$, and for any path $p = e_1 \dots e_n$ there exists $p^{\ast} \defeq e_n^{\ast} \dots e_1^{\ast}$. For notational convenience we extend the source and range maps by $s(e^*) \defeq r(e)$, $r(e^*) \defeq s(e)$ for all $e \in E^1$, and accordingly $s(p^*) \defeq r(p) = r(e_n)$, $r(p^*) \defeq s(p) = s(e_1)$ for a path $p = e_1 \dots e_n$.

Observe that the Leavitt path algebra $L_S(E)$ can also be defined as the quotient of the free $S$-algebra $S \langle v, e, e^{\ast} \mid v \in E^0 ,\, e \in E^1 \rangle$ by the congruence~$\sim$ generated by the following ordered pairs:
\begin{enumerate}
\item $(v w, \delta_{v, w} v)$ for all $v, w \in E^0$,
\item $(s(e) e, e), (e, e r(e))$ and $(r(e) e^{\ast}, e^{\ast}), (e^{\ast}, e^{\ast} s(e))$ for all $e \in E^1$,
\item $(e^* f, \delta_{e, f} r(e))$ for all $e, f \in E^1$,
\item $(v, \sum_{e \in s^{-1}(v)} e e^{\ast})$ for all regular vertices $v \in E^0$.
\end{enumerate}
 
If~$A$ is an $S$-algebra generated by a family $\{ a_v, b_e, c_{e^{\ast}} \mid v\in E^0 ,\, e\in E^1 \}$ of elements satisfying relations analogous to (1) -- (4) in Definition~\ref{lpaDef}, then there is a unique $S$-algebra homomorphism $\phi \colon L_S(E) \rightarrow A$ given by ${\phi(v) = a_v}$, ${\phi(e) = b_e}$ and ${\phi(e^{\ast}) = c_{e^{\ast}}}$.
We refer to this property as the \textit{universal homomorphism property} of $L_S(E)$. Moreover, by \cite[Prop.~2.4]{knz:solpawcias}, every monomial in $L_S(E)$ is of the form $s p q^*$, where $s \in S$ and $p, q$ are paths in~$E$ such that $r(p) = r(q)$.

Using the universal homomorphism property of $L_S(E)$ and Remark~\ref{graphgroupSteinAlg}\,(1), we immediately obtain that for each graph~$E$ and commutative semiring~$S$, there exists a unique $S$-algebra homomorphism \[ \pi_E \colon L_S(E) \longrightarrow A_S(\mcG_E) \]
such that $\pi_E(v) = 1_{Z(v, v)}$, $\pi_E(e) = 1_{Z(e, r(e))}$, and $\pi_E(e^*) = 1_{Z(r(e), e)}$ for all $v \in E^0$ and $e \in E^1$. In particular, $\pi_E(p q^*) = 1_{Z(p, q)}$ for all paths $p, q \in E^*$ with $r(p) = r(q)$. We refer to this homomorphism as the \textit{natural homomorphism} from $L_S(E)$ to $A_S(\mcG_E)$. Clark and Sims in \cite[Ex.~3.2]{cs:eghmesa} showed that $\pi_E$ is always an isomorphism when~$S$ is a commutative unital ring. However, as the next result shows, this is in general not true for our semiring setting which might be, similarly as for Prop.~\ref{prop:inverse_sg}, explained by a lack of zero sums.

\begin{prop}\label{suj-nathom}
Let $E$ be a graph and $S$ an additively idempotent commutative semiring. Then, the natural homomorphism $\pi_E: L_S(E)\longrightarrow A_S(\mcG_E)$ is surjective if and only if $E$ is row-finite.
\end{prop}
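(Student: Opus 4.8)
The plan is to first pin down the image of $\pi_E$ and then prove the two implications separately. Since every monomial of $L_S(E)$ has the form $s\,pq^*$ with $p,q\in E^*$ and $r(p)=r(q)$ by \cite[Prop.~2.4]{knz:solpawcias}, and since $\pi_E(pq^*)=1_{Z(p,q)}$, the image is exactly $\on{im}(\pi_E)=\on{Span}_S\{\,1_{Z(p,q)}\mid p,q\in E^*,\ r(p)=r(q)\,\}$. On the other hand, by Remark~\ref{graphgroupSteinAlg}\,(2) the algebra $A_S(\mcG_E)$ is generated as an $S$-semimodule by the functions $1_{Z(\alpha,\beta,F)}$ with $r(\alpha)=r(\beta)$ and $F\subseteq s^{-1}(r(\alpha))$ finite. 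Thus surjectivity is equivalent to showing that each such generator lies in $\on{Span}_S\{1_{Z(p,q)}\}$.

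For the implication ``$E$ row-finite $\Rightarrow$ $\pi_E$ surjective'', the key observation is the disjoint decomposition $Z(\alpha,\beta)=\bigsqcup_{e\in s^{-1}(w)}Z(\alpha e,\beta e)$, where $w\defeq r(\alpha)=r(\beta)$ is a regular vertex: indeed any $x\in X_E$ with $s(x)=w$ then necessarily begins with an edge out of $w$. If $E$ is row-finite this union is finite, and since moreover $Z(\alpha,\beta,F)=\bigsqcup_{e\in s^{-1}(w)\setminus F}Z(\alpha e,\beta e)$ (with $F=\varnothing$ and $Z(\alpha,\beta,F)=Z(\alpha,\beta)$ when $w$ is a sink), additive idempotency of $S$ yields $1_{Z(\alpha,\beta,F)}=\sum_{e\in s^{-1}(w)\setminus F}1_{Z(\alpha e,\beta e)}=\sum_e\pi_E\big((\alpha e)(\beta e)^*\big)$. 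This is a finite sum lying in the image, so every generator is hit and $\pi_E$ is surjective.

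For the converse I would argue contrapositively: assuming $E$ is not row-finite, fix an infinite emitter $v$ and a single edge $e_0\in s^{-1}(v)$, and put $F\defeq\{e_0\}$. The goal is to show $1_{Z(v,v,F)}\notin\on{im}(\pi_E)$. The reduction step is to restrict to the unit space: since $Z(p,q)\cap\mcG_E^{(0)}=\varnothing$ unless $p=q$ (distinct paths of equal length never agree after concatenation, while unequal lengths force a nonzero middle coordinate), every element $\sum_i s_i1_{Z(p_i,q_i)}$ of the image restricts on $\mcG_E^{(0)}$ to $\sum_{i:\,p_i=q_i}s_i1_{Z(p_i,p_i)}$. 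As $1_{Z(v,v,F)}$ is supported on $\mcG_E^{(0)}$, membership in the image would give $1_{Z(v,v,F)}=\sum_j s_j1_{Z(p_j,p_j)}$ for finitely many paths $p_j$ with $s_j\ne 0$. Evaluating at the unit $(v,0,v)\in Z(v,v,F)$ forces $\sum_{j:\,p_j=v}s_j=1$, so some $p_j$ equals $v$. Evaluating next at a point $(e_0y,0,e_0y)\in Z(e_0,e_0)$ --- such $y\in X_E$ with $s(y)=r(e_0)$ exists, as from any vertex one can build an element of $X_E$ --- the left-hand side vanishes because this point has been removed from $Z(v,v,F)$, whereas the right-hand side is of the form $1+t$ and hence is nonzero, since an additively idempotent semiring is zero-sum free and $1+t=0$ would force $1=0$. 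This contradiction completes the converse.

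The main obstacle is exactly this converse. The naive idea is to ``carve out'' the deleted branch $Z(e_0,e_0)$ from $Z(v,v)=\on{supp}(\pi_E(v))$, but the absence of subtraction in a semiring makes this impossible: once the full unit $1_{Z(v,v)}=\pi_E(v)$ enters a representation it can never be trimmed back down. The restriction-to-units step isolates the relevant combination of the $1_{Z(p,p)}$, and the two pointwise evaluations --- one to force a length-zero summand at $v$, the other to detect the deleted branch --- convert the lack of zero sums into the desired contradiction; verifying the decomposition of $Z(\alpha,\beta)$ and that every vertex is the source of some element of $X_E$ are the routine supporting facts.
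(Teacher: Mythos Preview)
Your argument is correct. The direction ``$E$ row-finite $\Rightarrow$ $\pi_E$ surjective'' is essentially identical to the paper's: both use $Z(\alpha,\beta,F)=\bigsqcup_{e\in s^{-1}(r(\alpha))\setminus F}Z(\alpha e,\beta e)$ together with additive idempotency to write $1_{Z(\alpha,\beta,F)}=\pi_E\bigl(\sum_{e\notin F}(\alpha e)(\beta e)^*\bigr)$.

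For the converse the two proofs diverge. The paper argues \emph{directly}: assuming $\pi_E$ surjective, it fixes any vertex $v$ (not a sink) and a nonempty finite $F\subseteq s^{-1}(v)$, writes $1_{Z(v,v,F)}=\sum_i s_i 1_{Z(p_i,q_i)}$, and then uses zero-sum-freeness \emph{once, globally} to conclude the equality of supports $Z(v,v,F)=\bigcup_i Z(p_i,q_i)$. From this set-theoretic identity it reads off $p_i=q_i=e_ir_i$ with $e_i\in s^{-1}(v)$ and deduces $s^{-1}(v)\subseteq F\cup\{e_1,\dots,e_n\}$, hence finiteness. Your contrapositive argument instead singles out two carefully chosen points of $\mcG_E^{(0)}$: evaluating at $(v,0,v)$ forces a summand $p_j=v$, and evaluating at $(e_0y,0,e_0y)$ then produces $0=1+t$, contradicting zero-sum-freeness. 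Both approaches hinge on the same phenomenon --- in a zero-sum-free semiring one cannot ``subtract off'' the contribution of $1_{Z(v,v)}=\pi_E(v)$ once it appears --- but the paper extracts a stronger set equality in one stroke, whereas your two-point probe is more hands-on and yields only the specific contradiction needed. Either route is perfectly adequate for the proposition.
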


\begin{proof}
($\Longrightarrow$). Suppose that $\pi_E$ is surjective, and let $v \in E^0$. Our claim is that $s^{-1}(v)$ is finite. We may assume that~$v$ is not a sink and choose some nonempty finite subset~$F$ of $s^{-1}(v)$. Since $\pi_E$ is surjective, there exists an element $\alpha \in L_S(E)$ such that $\pi_E(\alpha) = 1_{Z(v, v, F)}$.
By \cite[Prop.~2.4]{knz:solpawcias}, this element can be written in the form $\alpha = \sum^n_{i=1}s_i p_iq^*_i$, where $s_i\in S \!\setminus\! \{ 0 \}$ and $p_i, q_i$ are paths in~$E$ such that $r(p_i) = r(q_i)$. Then we have
\[ 1_{Z(v, v, F)} = \pi_E(\alpha) = \sum_{i=1}^n s_i \pi_E \big( p_i q_i^* \big) = \sum_{i=1}^n s_i 1_{Z(p_i, q_i)} . \]
Since the semiring is zero-sum free, for all $x \in \mcG_E$ we have $\sum_i s_i 1_{Z(p_i, q_i)} (x) \ne 0$ if and only if $x \in \bigcup_i Z(p_i, q_i)$, and therefore \[ Z(v, v, F) = \bigcup_{i=1}^n Z(p_i, q_i) . \]
In particular, $Z(p_i, q_i) \subseteq Z(v, v) \subseteq \mcG_E^{(0)}$ for all~$i$, from which we infer that $p_i = q_i$ and $s(p_i) = s(q_i) = v$. Furthermore, since $F \ne \varnothing$ there holds $p_i = q_i \ne v$ for all~$i$.
Hence, for every $1 \le i \le n$, we may write $p_i = q_i = e_i r_i$ for some edge $e_i \in s^{-1}(v)$ and path~$r_i$ in~$E$, and thus $Z(p_i, q_i) = Z(e_i r_i, e_i r_i) \subseteq Z(e_i, e_i)$. Now since
\[ \bigcup_{e \in s^{-1}(v) \setminus F} Z(e, e) = Z(v, v, F) = \bigcup_{i=1}^n Z(p_i, q_i) \subseteq \bigcup_{i=1}^n Z(e_i, e_i) , \]
we deduce that $s^{-1}(v) \!\setminus\! F \subseteq \{ e_1, \dots, e_n \}$, whence $s^{-1}(v) \subseteq F \cup \{ e_1, \dots, e_n \}$ is a finite set as claimed. Therefore, $E$~is a row-finite graph.

($\Longleftarrow$). Assume that~$E$ is a row-finite graph. Let $\alpha, \beta \in E^*$ with $r(\alpha) = r(\beta)$ and let~$F$ be a finite subset of $s^{-1}(r(\alpha))$. We claim that $\pi_E^{-1}(1_{Z(\alpha, \beta, F)}) \ne \varnothing$. If $r(\alpha)$ is a sink, then necessarily $F = \varnothing$ and $Z(\alpha, \beta, F) = Z(\alpha, \beta)$, so we have that $\alpha \beta^* \in \pi_E^{-1}(1_{Z(\alpha, \beta)})$.
Thus we may assume that $r(\alpha)$ is not a sink, and hence \[ Z(\alpha, \beta, F) = Z(\alpha, \beta) \setminus \bigcup_{e \in F} Z(\alpha e, \beta e) = \bigcup_{e \in F^c} Z(\alpha e, \beta e) , \]
where $F^c \defeq s^{-1}(r(\alpha)) \setminus F$. Since~$E$ is row-finite, $F^c$ is a finite set, and so \[ 1_{Z(\alpha, \beta, F)} = \sum_{e \in F^c} 1_{Z(\alpha e, \beta e)} = \sum_{e \in F^c} \pi_E((\alpha e) (\beta e)^*) = \pi_E \big( \sum_{e \in F^c} (\alpha e) (\beta e)^* \big) , \]
hence $\sum_{f \in F^c} (\alpha e) (\beta e)^* \in \pi_E^{-1}(1_{Z(\alpha, \beta, F)})$, showing the claim. By Remark~\ref{graphgroupSteinAlg}\,(2) this claim implies that~$\pi_E$ is surjective, thus finishing the proof.
\end{proof}

Next we investigate the injectivity of the natural homomorphism $\pi_E$. We note (\cite[Ex.~3.2]{cs:eghmesa}) that if~$S$ is a commutative unital ring, then injectivity follows from the Graded Uniqueness Theorem of Tomforde \cite[Th.~5.3]{t:lpawciacr}, which is based on using graded ring and homogeneous ideal considerations.
In our semiring setting, however, concepts like homogeneous ideal and graded quotient algebra are not well-established, thus we present a novel argument. But first we recall some notations and establish a few useful facts.

Let~$E$ be an arbitrary graph and~$S$ a commutative semiring. Following~\cite{ap:tlpaoag05}, a monomial in $L_S(E)$ is a \emph{real path} if it contains no term of the form $e^{\ast} \in E^{\ast}$, and a polynomial $\alpha \in L_S(E)$ is in \emph{only real edges} if it is an $S$-linear combination of real paths;
let $L_S(E)_{\on{real}}$ denote the subhemiring of all polynomials in only real edges in $L_S(E)$.  For a cycle~$c$ based at the vertex~$v$, we use the notation
\[ c^0 \defeq v \quad\text{ and }\quad c^{-n} = (c^*)^n, \text{ for all $n \in \mathbb N$} . \]
Moreover, for such a cycle~$c$ and any polynomial $p(x) = \sum_{i=m}^n s_i x^i \in S[x, x^{-1}]$ (where $m, n \in \mathbb{Z}$ with $m \le n$), we denote by $p(c)$ the element \[ p(c) \defeq \sum_{i=m}^n s_i c^i \in L_S(E) . \]

The following important fact, being an $\B$-algebra analog of the Reduction Theorem \cite[Th.~2.2.11]{aas:lpa}, provides a method to prove the injectivity of $\B$-algebra homomorphisms from Leavitt path algebras $L_{\B}(E)$ of row-finite graphs~$E$.

\begin{lem}\label{graph-cong-reduction} Let~$E$ be a row-finite graph and~$\rho$ a congruence on $L_{\B}(E)$ different from the diagonal congruence. Then, at least one of the following is true:
\begin{enumerate}[\quad \upshape (1)]
\item $(v, 0) \in \rho$ for some $v \in E^0$;
\item $(p(c), q(c)) \in \rho$, where~$c$ is a cycle in $E$ without exits and $p(x)$, $q(x)$ are distinct polynomials in $\B[x, x^{-1}]$.
\end{enumerate}
\end{lem}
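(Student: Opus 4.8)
The plan is to transport the classical Reduction Theorem \cite[Th.~2.2.11]{aas:lpa} to the congruence setting, working throughout with additive idempotency and the induced natural order on $L_{\B}(E)$. Since $L_{\B}(E)$ is additively idempotent, it is partially ordered by defining $a \le b$ if and only if $a+b=b$, for $a,b \in L_{\B}(E)$. As $\rho$ is different from the diagonal congruence, I would first choose $(f,g)\in\rho$ with $f\neq g$; then $(f,f+g)=(f+f,f+g)\in\rho$ and $(g,f+g)=(g+g,f+g)\in\rho$, and since $f\neq g$ at least one of $f,g$ lies strictly below $f+g$. Replacing $g$ by $f+g$, we may therefore assume $(f,g)\in\rho$ with $f<g$, exactly as in the proof of Lemma~\ref{cong-reduction}.

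The heart of the argument is to \emph{reduce the gap} between $f$ and $g$ to a normal form. Writing $g$ as a finite sum of monomials $g=\sum p q^{*}$ with coefficient $1$ (as $S=\B$) and $r(p)=r(q)$, via \cite[Prop.~2.4]{knz:solpawcias}, the strict inequality $f<g$ forces some monomial $p_0 q_0^{*}$ of $g$ not to be absorbed by $f$. I would then multiply the pair on the left by $p_0^{*}$ and on the right by $q_0$: since $\rho$ is a congruence, $(p_0^{*} f q_0,\, p_0^{*} g q_0)\in\rho$, and because $p_0^{*}p_0=r(p_0)$ and $q_0^{*}q_0=r(q_0)=r(p_0)$, the chosen monomial collapses to the single vertex $v\defeq r(p_0)$. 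Multiplying further by $v$ on both sides keeps us inside the corner $v L_{\B}(E) v$ and preserves membership in~$\rho$, producing a pair $(\vartheta,\psi)\in\rho$ with $\vartheta\le\psi$, where $\psi$ contains $v$ as the image of $p_0 q_0^{*}$.

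The final step is a dichotomy on the vertex~$v$, mirroring the two normal forms of the classical theorem. If a cycle $c$ without exits is based at~$v$, then $v L_{\B}(E) v\cong\B[x,x^{-1}]$ via $c\mapsto x$, $c^{*}\mapsto x^{-1}$, so the pair automatically has the form $(p(c),q(c))\in\rho$ with $p(x),q(x)\in\B[x,x^{-1}]$, and these are distinct since the strict gap is retained; this is case~(2). If no cycle without exits is based at~$v$, I would instead iterate the reduction, pushing supports forward along edges by means of the Cuntz--Krieger relations (3) and~(4) of Definition~\ref{lpaDef} and using row-finiteness to decrease $|p|+|q|$, until the larger side equals $v$ and the smaller side equals $0$; this yields $(v,0)\in\rho$, which is case~(1).

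The main obstacle I anticipate is maintaining strictness throughout: each left/right multiplication only guarantees $\le$, so the reducing paths $p_0,q_0$ and the subsequent edge-pushes must be chosen so that the two sides never collapse to equality, while simultaneously controlling the effect on \emph{every} monomial of $f$ in the presence of the non-unique normal forms in $L_{\B}(E)$. Ensuring that the excess of $g$ over $f$ survives either as a single vertex (case~1) or as a genuine Laurent-polynomial difference in the cycle corner (case~2) is more delicate than the single-element reduction of \cite[Th.~2.2.11]{aas:lpa}, precisely because the congruence relation and the natural order must be respected at the same time.
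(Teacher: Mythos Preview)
Your outline aims at the right target and uses the right tools (the natural order and left/right multiplication inside~$\rho$), but it misses the key technical device that the paper uses, and the obstacle you flag at the end is precisely the gap that remains open in your plan.

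The paper does \emph{not} attempt a direct reduction via $p_0^{*}(\cdot)q_0$ on general monomials $pq^{*}$. Instead it first invokes \cite[Prop.~4.3]{knz:solpawcias}: any congruence on $L_{\B}(E)$ is generated by its restriction $\rho_{\mathrm{real}}$ to the subhemiring $L_{\B}(E)_{\mathrm{real}}$ of polynomials in \emph{only real edges}. This lets one start with $(a,b)\in\rho$ where $a$ and $a+b$ are sums of genuine \emph{paths}, written $a=p_1+\cdots+p_n$ and $a+b=p_1+\cdots+p_n+p$ with the $p_i,p$ distinct and $n$ chosen minimal. Minimality of~$n$ is what replaces your unresolved ``maintain strictness'' problem: multiplying by $s(p),r(p)$ and then by $q^{*}$ for the shortest prefix~$q$ reaching $r(p)$ cannot identify the two sides without contradicting minimality, and one lands on a pair of sums of \emph{closed paths} at a single vertex~$v$. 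The dichotomy is then on the number of closed simple paths based at~$v$: if exactly one, it is a cycle~$c$, and one sub-splits on whether~$c$ has an exit (yielding case~(2) or, via an exit edge, $(0,r(f))\in\rho$); if at least two, say $c,d$ with $c^{*}d=0=d^{*}c$, one conjugates by a high power of~$c$ and then by~$d$ to kill all cross terms and obtain $(0,v)\in\rho$.

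By contrast, your step ``iterate the reduction, pushing supports forward along edges \dots\ to decrease $|p|+|q|$'' does not give a well-founded descent: conjugating a general monomial $pq^{*}$ by $p_0^{*},q_0$ can lengthen the surviving ghost or real tails rather than shorten them, and without the passage to $L_{\B}(E)_{\mathrm{real}}$ there is no minimality bookkeeping to prevent the two sides from collapsing to equality along the way. Likewise, your appeal to $vL_{\B}(E)v\cong\B[x,x^{-1}]$ in the cycle case presupposes that after your reduction both sides already lie in that corner \emph{and remain distinct}, which is exactly the point at issue. The missing idea is the real-polynomial reduction of \cite[Prop.~4.3]{knz:solpawcias}; once you have it, the combinatorics on paths (rather than on $pq^{*}$'s with non-unique normal forms) makes both the strictness and the termination arguments go through.
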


\begin{proof} The proof is essentially based on the ideas in the proof of the direction ($\Longleftarrow$) in \cite[Th.~4.4]{knz:solpawcias}.
	
By \cite[Prop.~4.3]{knz:solpawcias}, the congruence~$\rho$ is generated by $\rho_{\on{real}} \defeq \rho \cap (L_{\B}(E)_{\on{real}})^2$ and $\rho_{\on{real}} \ne \Delta_{L_{\B}(E)_{\on{real}}}$. Hence, there exist two elements $a, b \in L_{\B}(E)_{\on{real}}$ such that $a \ne b$ and $(a, b) \in \rho$.
Since $L_{\B}(E)$ is an additively idempotent hemiring, we can consider the natural order defined by ${s \le s'} \Longleftrightarrow {s+s' = s'}$.  We have $(a, a+b) = (a+a, a+b) \in \rho$, $(b, a+b) = (b+b, a+b) \in \rho$, and since $a \ne b$, either $a < a+b$ or $b < a+b$. Thus, keeping in mind that $(a+x, b+x) \in \rho$ for all $x \in L_{\B}(E)$ and without loss of generality, we may assume that $a < a+b$ and that $a$, $a+b$ are written in the form
\[ a = p_1 + \ldots + p_n, \quad a+b = p_1 + \ldots + p_n + p \]
where $p_1, \dots, p_n, p$ are distinct paths in~$E$. We also may choose~$a$ having the minimal number~$n$ of such $\{p_1 , \dots,  p_n \}$.

Let $v \defeq s(p)$, $w \defeq r(p) \in E^0$. Then $(v a w, v (a+b) w) \in \rho$, where $v a w = v p_1 w + \dots + v p_n w$ and $v (a+b) w = v p_1 w + \ldots + v p_n w + p$, hence by minimality we may assume that $s(p_i) = v$ and $r(p_i) = w$ for all $1 \le i \le n$.

Suppose that $v \ne w$. Write $p = q p'$, where~$q$ is a path from~$v$ to~$w$ of shortest length and~$p'$ is a closed path based at~$w$. For every~$p_j$ such that $q^{\ast} p_j \ne 0$ we have $p_j = q p_j'$ for some closed path~$p_j'$ based at~$w$. Then we have
\[ (q^{\ast} a, q^{\ast} (a+b)) = (q^{\ast} p_1 + \ldots + q^{\ast} p_n, q^{\ast} p_1 + \ldots + q^{\ast} p_n + q^{\ast} p) = (\sum_{j \in J} p_j', \sum_{j \in J} p_j' + p') , \]
and thus $(\sum_{j \in J} p_j', \sum_{j \in J} p_j' + p') \in \rho$ with $p_j'$ (for $j \in J$) and $p'$ distinct closed paths based at~$w$, where~$J$ is a subset of $\{ 1, \dots, n \}$. Therefore, without loss of generality, we may assume that $v = w$, \textit{i.e.}, that $p, p_1, \dots, p_n$ are distinct closed paths based at~$v$, and consider the following two possible cases.

\emph{Case~1:} There is exactly one closed simple path based at~$v$, say $c \defeq e_1 \dots e_m$. It follows that~$c$ is a cycle. Then, there are distinct positive integers~$k$ and $k_i$ for $1 \le i \le n$ such that $p = c^k$ and $p_i = c^{k_i}$ for all~$i$. Write
\begin{gather*}
(c^{\ast})^ka = (c^{\ast})^{h_1} + \ldots + (c^{\ast})^{h_{r}} + c^{h_{r+1}} + \ldots + c^{h_n} \\
(c^{\ast})^k(a+b) = (c^{\ast})^{h_1} + \ldots + (c^{\ast})^{h_{r}} + c^{h_{r+1}} + \ldots+ c^{h_n} + v.
\end{gather*}
If~$c$ has no exit, we may consider distinct polynomials~$p$ and~$q$ in $\B[x, x^{-1}]$, defined by $p(x) = (x^{-1})^{h_1} + \ldots + (x^{-1})^{h_{r}} + x^{h_{r+1}} + \ldots + x^{h_n}$ and $q(x) = (x^{-1})^{h_1} + \ldots + (x^{-1})^{h_{r}} + x^{h_{r+1}} + \ldots+ x^{h_n} + 1$, and deduce that $p(c) = (c^{\ast})^k a$ and $q(c) = (c^{\ast})^k (a + b)$, whence $(p(c), q(c)) \in \rho$, as desired.
On the other hand, if~$c$ has an exit~$f$, \textit{i.e.}, there exists  $1 \le j \le m$ such that $e_j \ne f$ and $s(f) = s(e_j)$, we obtain $(0, r(f)) = (z^{\ast} p(c) z, z^{\ast} q(c) z) \in \rho$ for $z \defeq e_1 \dots e_{j-1} f$, as desired.

\emph{Case~2:} There are at least two distinct closed simple paths
based at~$v$, say~$c$ and~$d$, and we have $c^{\ast} d = 0 = d^{\ast} c$.
Note that $(p^{\ast} a, p^{\ast} (a+b) ) \in \rho$ and let
\begin{gather*}
\alpha \defeq p^{\ast} a = q_1^{\ast} + \ldots + q_{s}^{\ast} + q_{s+1} + \ldots + q_n \\
\beta \defeq p^{\ast}(a+b) = q_1^{\ast} + \ldots + q_{s}^{\ast} + q_{s+1} + \ldots + q_n + v,
\end{gather*}
where $q_1, \dots, q_n$ are closed paths in~$E$ based at~$v$.
Then for some $k \in \mathbb{N}$, where $|c^k| > \max \{ |q_1|, \dots, |q_n| \}$, we get $\alpha' \defeq (c^{\ast})^k x c^k = (c^{\ast})^k q_1^{\ast}c^k + \ldots + (c^{\ast})^k q_s^{\ast} c^k + (c^{\ast})^k q_{s+1}c^k + \ldots + (c^{\ast})^k q_n c^k$ and $\beta' \defeq (c^{\ast})^k y c^k = (c^{\ast})^k q_1^{\ast} c^k + \ldots + (c^{\ast})^k q_s^{\ast} c^k + (c^{\ast})^k q_{s+1}c^k + \ldots + (c^{\ast})^k q_n c^k + v$, and $(\alpha', \beta') \in \rho$.
If $(c^{\ast})^k q_i^{\ast} c^k = 0 = (c^{\ast})^k q_j c^k$ for all $1 \le i \le s$ and $s \!+\! 1 \le j \le n$, then $(0, v) = (\alpha', \beta') \in \rho$. Note that if $(c^{\ast})^k q_j c^k \ne 0$, then $(c^{\ast})^k q_j \ne 0$, and as $|c^k| > |q_j|$, we have $c^k = q_j q_j'$ for some closed path $q_j'$, whence $q_j = c^{\ell}$ for some positive integer $\ell \le k$.
Similarly, in the case $(c^{\ast})^k q_i^{\ast} c^k \ne 0$, we get that $q_i^{\ast} = (c^{\ast})^\ell$ for some positive integer $\ell \le k$.  Since $c^{\ast} d = 0 = d^{\ast} c$, for every $i, j$, one gets $d^{\ast} (c^{\ast})^k q_i^{\ast} c^k d = 0 = d^{\ast} (c^{\ast})^k q_j c^k d$, and hence, $(0, v) = (d^{\ast} \alpha' d, d^{\ast} \beta' d) \in \rho$, as desired, thus finishing the proof.
\end{proof}

Two results of importance, which are direct consequences of Lemma~\ref{graph-cong-reduction}, are the following Uniqueness Theorems.
These results can be considered as the $\B$-algebra analogs of \cite[Th.~5.3, Th.~6.5]{t:lpawciacr}. Namely, the following corollary is an $\B$-algebra analog of the Graded Uniqueness Theorem \cite[Th.~5.3]{t:lpawciacr}.

\begin{cor}\label{Uniqueness Thm}
Let~$E$ be a row-finite graph and~$A$ an arbitrary $\B$-algebra. Then, a hemiring homomorphism $\phi \colon L_{\B}(E) \longrightarrow A$ is injective if and only if the following two conditions are satisfied:
\begin{enumerate}
\item $\phi(v) \ne 0$ for all $v \in E^0$;
\item $\phi(p(c)) \ne \phi(q(c))$ for all cycles~$c$ in~$E$ without exits, and for all distinct polynomials $p(x), q(x) \in \B[x, x^{-1}]$.
\end{enumerate}
\end{cor}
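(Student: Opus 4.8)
The plan is to split the equivalence into its two implications, letting the Reduction Theorem (Lemma~\ref{graph-cong-reduction}) carry the backward direction essentially for free, and reducing the forward direction to two intrinsic non-equalities in $L_{\B}(E)$ that I read off from the natural homomorphism $\pi_E$.

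\textbf{($\Longleftarrow$).} Assuming (1) and (2), I would form the kernel congruence $\rho \defeq \{ (x,y) \in L_{\B}(E)^2 \mid \phi(x) = \phi(y) \}$ and argue by contraposition. If $\phi$ is not injective then $\rho \ne \Delta_{L_{\B}(E)}$, so Lemma~\ref{graph-cong-reduction} yields one of its two alternatives. In case (1) we get $(v, 0) \in \rho$ for some $v \in E^0$, i.e.\ $\phi(v) = \phi(0) = 0$, contradicting hypothesis (1); in case (2) we get $(p(c), q(c)) \in \rho$ for a cycle $c$ without exits and distinct $p, q \in \B[x,x^{-1}]$, i.e.\ $\phi(p(c)) = \phi(q(c))$, contradicting hypothesis (2). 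Hence $\rho = \Delta_{L_{\B}(E)}$ and $\phi$ is injective.

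\textbf{($\Longrightarrow$).} Here it suffices to show that $v \ne 0$ in $L_{\B}(E)$ for every $v \in E^0$ and that $p(c) \ne q(c)$ in $L_{\B}(E)$ for every exit-free cycle $c$ and all distinct $p, q \in \B[x,x^{-1}]$; injectivity of $\phi$ (together with $\phi(0)=0$) then immediately yields $\phi(v) \ne 0$ and $\phi(p(c)) \ne \phi(q(c))$. To prove these intrinsic facts I would push everything through $\pi_E \colon L_{\B}(E) \to A_{\B}(\mcG_E)$. Since $\pi_E(v) = 1_{Z(v,v)}$ and $Z(v,v) \ne \varnothing$, we have $\pi_E(v) \ne 0$, so $v \ne 0$. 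For an exit-free cycle $c = e_1 \cdots e_m$ based at $v$, the absence of exits forces each cycle vertex to emit exactly one edge; hence no cycle vertex is a sink or an infinite emitter and the only element of $X_E$ with source $v$ is the infinite path $c^{\infty} = e_1 e_2 \cdots$. Consequently $\pi_E(c^i) = 1_{\{ (c^{\infty}, i|c|, c^{\infty}) \}}$ for every $i \in \mathbb Z$ (using $\pi_E(pq^*) = 1_{Z(p,q)}$ with $q = v$ for $i \ge 0$ and with $p = v$ for $i < 0$). As $|c| \ge 1$, the exponent $i$ is recovered from the singleton, and since $\B$ is additively idempotent and zero-sum free we obtain $\pi_E(p(c)) = 1_{W_p}$ with $W_p = \{ (c^{\infty}, i|c|, c^{\infty}) \mid i \in \on{supp}(p) \}$. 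The set $W_p$ determines $\on{supp}(p)$ and hence $p$, so distinct $p, q$ give $\pi_E(p(c)) \ne \pi_E(q(c))$ and therefore $p(c) \ne q(c)$.

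\textbf{Main obstacle.} I expect the only delicate point to be the forward direction's verification that an exit-free cycle behaves like the Laurent indeterminate inside $A_{\B}(\mcG_E)$: one must confirm that $c^{\infty}$ is genuinely the unique element of $X_E$ emanating from $v$, and then invoke zero-sum-freeness of $\B$ to conclude that $\pi_E(p(c))$ faithfully records $\on{supp}(p)$. The backward direction is, by contrast, a direct bookkeeping application of Lemma~\ref{graph-cong-reduction}.
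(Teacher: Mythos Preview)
Your proof is correct and the backward direction is identical to the paper's, both routing through Lemma~\ref{graph-cong-reduction} via the kernel congruence. The forward direction also agrees on condition~(1), using $\pi_E(v) = 1_{Z(v,v)} \ne 0$.

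The only genuine difference is how you verify $p(c) \ne q(c)$ for an exit-free cycle~$c$. The paper invokes an external result (\cite[Prop.~3.1\,(2)]{knz:solpawcias}) asserting that $\B[x,x^{-1}] \to v L_{\B}(E) v$, $f \mapsto f(c)$, is an isomorphism. You instead compute $\pi_E(c^i)$ directly in $A_{\B}(\mcG_E)$, observing that the exit-free condition forces $Z(c^i,v)$ (resp.\ $Z(v,c^{-i})$) to be the singleton $\{(c^{\infty}, i|c|, c^{\infty})\}$, so that $\pi_E(p(c)) = 1_{W_p}$ recovers $\on{supp}(p)$. This is a legitimate and self-contained alternative; in fact it is exactly the computation the paper carries out later in the proof of Theorem~\ref{LPAs are Steinberg-Alg} (there after first multiplying by $c^k$ to make all exponents non-negative). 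Your route avoids the external citation at the cost of a short explicit calculation, while the paper's route is terser but relies on prior work. One minor remark: the passage from $\sum_i 1_{\{(c^{\infty},i|c|,c^{\infty})\}}$ to $1_{W_p}$ uses only that the singletons are pairwise disjoint, so neither idempotency nor zero-sum-freeness is strictly needed at that step.
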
	

\begin{proof}
($\Longrightarrow$). Assume that~$\phi$ is injective. Considering the natural homomorphism $\pi_E \colon L_{\B}(E) \longrightarrow A_{\B}(\mcG_E)$, we have $\pi_E(v) = 1_{Z(v, v)} \ne 0$ and so $v \ne 0$, for all $v \in E^0$. Then, since~$\phi$ is injective, $\phi(v) \ne 0$ for all $v \in E^0$, proving condition (1).

Let~$c$ be a cycle in~$E$ without exits, and let $p(x), q(x)$ be two distinct polynomials in $\B[x, x^{-1}]$. It is clear that $p(c), q(c)\in v L_{\B}(E) v$, where $v \defeq s(c) = r(c)$. As shown in the proof of \cite[Prop.~3.1\,(2)]{knz:solpawcias}, the natural homomorphism $\vartheta \colon \B[x, x^{-1}] \to v L_{\B}(E) v$, defined by $\vartheta(f) = f(c)$, is an isomorphism of $\B$-algebras. This implies that $p(c) = \vartheta(p) \ne \vartheta(q) = q(c)$, and hence $\phi(p(c)) \ne \phi(q(c))$, showing condition (2).

($\Longleftarrow$). Suppose that~$\phi$ is not injective. This implies that its congruence on $L_{\B}(E)$, namely $\ker(\phi) \defeq \{ (x, y) \in L_{\B}(E) \mid \phi(x) = \phi(y) \}$, is different from the diagonal congruence. By Lemma~\ref{graph-cong-reduction}, we either have $(v, 0) \in \ker(\phi)$ for some $v \in E^0$, or $(p(c), q(c)) \in \ker(\phi)$, where~$c$ is a cycle in~$E$ without exits, and $p(x)$, $q(x)$ are distinct polynomials in $\B[x, x^{-1}]$.
In the first case, we have $\phi(v) = \phi(0) = 0$, and in the second case, this implies that $\phi(p(c)) = \phi(q(c))$. Thus either condition (1) or condition (2) is violated, finishing the proof.
\end{proof}

The following corollary is an $\B$-algebra analog of the Cuntz-Krieger Uniqueness Theorem \cite[Th.~6.5]{t:lpawciacr}.

\begin{cor}\label{Cuntz-Krieger UniThm}
Let~$E$ be a row-finite graph in which every cycle has an exit, and let~$A$ be an arbitrary $\B$-algebra. If $\phi \colon L_{\B}(E) \longrightarrow A$ is a hemiring homomorphism with $\phi(v) \ne 0$ for all $v \in E^0$, then~$\phi$ is injective.
\end{cor}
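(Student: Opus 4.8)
The plan is to obtain this statement as an immediate consequence of Corollary~\ref{Uniqueness Thm}, the $\B$-algebra analog of the Graded Uniqueness Theorem. The key observation is that the hypothesis ``every cycle in~$E$ has an exit'' means precisely that~$E$ contains \emph{no} cycle without an exit. Consequently, condition~(2) of Corollary~\ref{Uniqueness Thm}---which is a statement quantified over all cycles~$c$ in~$E$ without exits---is vacuously satisfied, there being no such cycle to test. Since condition~(1) of Corollary~\ref{Uniqueness Thm}, namely $\phi(v) \ne 0$ for all $v \in E^0$, holds by assumption, both hypotheses of Corollary~\ref{Uniqueness Thm} are in force, and we conclude that~$\phi$ is injective.

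Alternatively, and without invoking the full strength of Corollary~\ref{Uniqueness Thm}, I would argue directly from the Reduction Theorem (Lemma~\ref{graph-cong-reduction}). Suppose, for contradiction, that~$\phi$ is not injective. Then its kernel congruence $\ker(\phi) \defeq \{ (x, y) \in L_{\B}(E)^2 \mid \phi(x) = \phi(y) \}$ is a congruence on $L_{\B}(E)$ different from the diagonal. By Lemma~\ref{graph-cong-reduction}, at least one of two alternatives holds: either $(v, 0) \in \ker(\phi)$ for some $v \in E^0$, which forces $\phi(v) = \phi(0) = 0$ and contradicts the hypothesis that $\phi(v) \ne 0$; or $(p(c), q(c)) \in \ker(\phi)$ for some cycle~$c$ in~$E$ \emph{without} exits and distinct polynomials $p(x), q(x) \in \B[x, x^{-1}]$, which is impossible since by hypothesis~$E$ possesses no cycle without an exit. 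In either case we reach a contradiction, so~$\phi$ must be injective.

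I expect no genuine obstacle here: the result is a formal specialization of the preceding uniqueness theorem, and both routes above are short. The only point deserving (minor) care is the verification that ``every cycle has an exit'' renders the cycle-condition vacuous, so that condition~(1) alone suffices---but this is immediate from the relevant definitions. I would present the first argument as the main proof for brevity and, if desired, remark on the second as an alternative derivation straight from Lemma~\ref{graph-cong-reduction}.
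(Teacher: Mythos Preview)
Your proposal is correct and matches the paper's own proof essentially verbatim: the paper simply states that the result follows directly from Corollary~\ref{Uniqueness Thm}, since under the hypothesis that every cycle has an exit, condition~(2) is automatically (vacuously) satisfied. Your alternative derivation straight from Lemma~\ref{graph-cong-reduction} is also fine and amounts to unwinding the proof of Corollary~\ref{Uniqueness Thm} in this special case.
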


\begin{proof}
This follows directly from Corollary~\ref{Uniqueness Thm}, since in the given situation the condition (2) is automatically satisfied.
\end{proof}

Now we are able to prove the following theorem, providing a criterion for the natural homomorphism from the Leavitt path algebra $L_{\B}(E)$ into the Steinberg algebra $A_{\B}(\mcG_E)$ to be an isomorphism. The result establishes that the Leavitt path algebra $L_{\B}(E)$ of a row-finite graph~$E$ is a Steinberg algebra, which can be considered as an $\B$-algebra analog of Clark and Sims's result \cite[Ex.~3.2]{cs:eghmesa}.

\begin{thm}\label{LPAs are Steinberg-Alg}
For any graph~$E$, the natural homomorphism $\pi_E \colon L_{\B}(E) \longrightarrow A_{\B}(\mcG_E)$ is an isomorphism if and only if~$E$ is row-finite.
\end{thm}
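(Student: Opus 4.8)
The plan is to combine the surjectivity criterion of Proposition~\ref{suj-nathom} with the injectivity criterion of Corollary~\ref{Uniqueness Thm}, both applied to the additively idempotent semiring $S = \B$. The forward implication is immediate: if $\pi_E$ is an isomorphism, then it is in particular surjective, so $E$ is row-finite by Proposition~\ref{suj-nathom}.

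For the converse, assume that $E$ is row-finite. Surjectivity of $\pi_E$ is again Proposition~\ref{suj-nathom}, so it remains to prove injectivity, for which I would invoke Corollary~\ref{Uniqueness Thm} with $A \defeq A_{\B}(\mcG_E)$ and $\phi \defeq \pi_E$. Condition~(1) there is clear, since $\pi_E(v) = 1_{Z(v, v)} \ne 0$ for every $v \in E^0$, as $Z(v, v)$ is a nonempty compact open bisection. The substance of the argument lies in verifying condition~(2): for every cycle~$c$ without exits and all distinct polynomials $p(x), q(x) \in \B[x, x^{-1}]$ one has $\pi_E(p(c)) \ne \pi_E(q(c))$.

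To establish condition~(2), I would analyze the local structure of~$\mcG_E$ along an exit-free cycle $c = e_1 \dots e_m$ based at a vertex~$v$. Because~$c$ has no exit, each vertex $s(e_i)$ emits exactly the edge~$e_i$; hence the only element of~$X_E$ with source~$v$ is the infinite path $x_0 \defeq e_1 \dots e_m e_1 \dots e_m \dots$, and $c^k x_0 = x_0$ for all $k$. Consequently each of the basic sets defining the images of the powers of~$c$ collapses to a single point, namely \[ \pi_E(c^i) = 1_{Z(c^i, v)} = 1_{\{ (x_0, \, i |c|, \, x_0) \}} \quad (i \ge 0), \qquad \pi_E(c^i) = 1_{Z(v, c^{-i})} = 1_{\{ (x_0, \, i |c|, \, x_0) \}} \quad (i < 0), \] and these singletons are pairwise disjoint for distinct~$i$, since the middle coordinates $i|c|$ are distinct. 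Writing $p(x) = \sum_i s_i x^i$ and $q(x) = \sum_i t_i x^i$ with $s_i, t_i \in \B$, additive idempotency of~$\B$ shows that $\pi_E(p(c))$ is the characteristic function of $\{ (x_0, i|c|, x_0) \mid s_i = 1 \}$, and likewise for~$q$; as $p \ne q$ these support sets differ, whence $\pi_E(p(c)) \ne \pi_E(q(c))$, as required.

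With both conditions of Corollary~\ref{Uniqueness Thm} verified, $\pi_E$ is injective, and together with surjectivity this yields that $\pi_E$ is an isomorphism, completing the proof. I expect the main obstacle to be condition~(2): one must pin down the groupoid~$\mcG_E$ along an exit-free cycle precisely enough to see that distinct integer powers of~$c$ are sent to characteristic functions of \emph{distinct} singletons, so that the coefficient collapse forced by the idempotent addition of~$\B$ nevertheless separates distinct Laurent polynomials.
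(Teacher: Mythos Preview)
Your proposal is correct and follows essentially the same approach as the paper: both directions rely on Proposition~\ref{suj-nathom}, and injectivity is obtained via Corollary~\ref{Uniqueness Thm} by checking that $\pi_E(v) \ne 0$ and that distinct Laurent polynomials in an exit-free cycle have distinct images. The only cosmetic difference is that the paper first multiplies by a suitable power $x^k$ to reduce to nonnegative exponents and then observes that the sets $Z(c^i, v)$ for $i \in \mathbb N$ are pairwise disjoint, whereas you handle positive and negative powers simultaneously by identifying each $Z(c^i, v)$ (resp.\ $Z(v, c^{-i})$) as the singleton $\{(x_0, i|c|, x_0)\}$; both arguments exploit the same underlying fact that an exit-free cycle forces a unique infinite path from its base vertex.
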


\begin{proof}
($\Longrightarrow$). It follows from Proposition~\ref{suj-nathom}.

($\Longleftarrow$). Assume that~$E$ is row-finite. By Proposition~\ref{suj-nathom}, $\pi_E$ is surjective. We claim that $\pi_E$ is injective by using Corollary~\ref{Uniqueness Thm}. Indeed, we first have $\pi_E(v) = 1_{Z(v, v)} \ne 0$ for all $v \in E^0$.

Let~$c$ be a cycle in~$E$ without exits based at the vertex~$v$, and $p(x)$, $q(x)$ two distinct polynomials in $\B[x, x^{-1}]$, therefore $x^k p(x) \ne x^k q(x)$ for all $k \in \mathbb Z$. We choose an integer~$k$ such that $x^k p(x) = \sum_{i \in F} x^i$ and $x^k q(x) = \sum_{i \in G} x^i$ for some distinct finite subsets~$F$ and~$G$ of $\mathbb N$.
Then we have $c^k p(c) = \sum_{i \in F} c^i$ and $c^k q(c) = \sum_{i \in G} c^i$, and hence $\pi_E(c^k p(c)) = \sum_{i \in F} 1_{Z(c^i, v)} = 1_{\bigsqcup_{i \in F} Z(c^i, v)}$ as well as $\pi_E(c^k q(c)) = \sum_{i \in G} 1_{Z(c^i, v)} = 1_{\bigsqcup_{i \in G} Z(c^i, v)}$.
Since $F \ne G$, we have \[ \pi_E(c^k p(c)) = 1_{\bigsqcup_{i \in F} Z(c^i, v)} \ne 1_{\bigsqcup_{i\in G} Z(c^i, v)} = \pi_E(c^k q(c)) , \]
whence $\pi_E(p(c)) \ne \pi_E(q(c))$. From these observations and Corollary~\ref{Uniqueness Thm}, we obtain that $\pi_E$ is injective, proving the claim and finishing the proof.
\end{proof}

In light of Theorem~\ref{LPAs are Steinberg-Alg}, the natural question arises whether there exists any isomorphism between the Leavitt path algebra $L_{\B}(E)$ and the Steinberg algebra $A_{\B}(\mcG_E)$, where~$E$ is an arbitrary graph. The following example gives a negative answer to this question. Before presenting it, we recall the notion of graph inverse semigroups introduced by Mesyan and Mitchell in \cite{mm:tsoagis}. 

Given a graph $E = (E^0, E^1, r, s)$, the \textit{graph inverse semigroup} $G(E)$ of $E$ is the semigroup with zero generated by the sets $E^0$ and $E^1$, together with a set of variables $\{e^{-1}\mid e\in E^1\}$, satisfying the following relations:
\begin{enumerate}
\item $v w = \delta_{v,w} v$ for all $v, w \in E^0$;
\item $s(e) e = e = e r(e)$ and $r(e) e^{-1} = e^{-1} = e^{-1} s(e)$ for all $e \in E^1$;
\item $e^{-1} f = \delta_{e,f} r(e)$ for all $e, f \in E^1$;
\end{enumerate}
where~$\delta$ is the Kronecker delta. We define $v^{-1} = v$ for each $v \in E^0$, and for any path $p = e_1 \dots e_n$ in~$E$ we let $p^{-1} \defeq e_n^{-1} \dots e_1^{-1}$.
With this notation, every nonzero element of $G(E)$ can be written uniquely as $p q^{-1}$ for some paths $p, q \in E^*$. It is not hard to verify that $G(E)$ is indeed an inverse semigroup, with $(p q^{-1})^{-1} = q p^{-1}$ for all $p, q \in E^*$. For further reference we refer to~\cite{mm:tsoagis}.

\begin{exa}\label{LPAs are not Steinberg-Alg}
Let $E = (E^0, E^1, r, s)$ be the graph with a single node $E^0 = \{ v \}$ and countably infinite set of edges $E^1 = \{ e_n \mid n \in \mathbb N \}$, where $r(e_n) = v = s(e_n)$ for all $n \in \mathbb N$. Then, $L_{\B}(E)$ is not isomorphic to $A_{\B}(\mcG_E)$.
\end{exa}

\begin{proof}
Consider the semigroup algebra $\B[G(E)]$. By the universal homomorphism property of $L_{\B}(E)$, there exists a unique $\B$-algebra homomorphism $\vartheta \colon L_{\B}(E) \longrightarrow \B[G(E)]$ such that $\vartheta(p q^*) = p q^{-1}$ for all $p, q \in E^*$. Since $\B[G(E)]$ is the free $\B$-semimodule with basis $\{ p q^{-1} \mid p, q \in E^* \}$, the map is an isomorphism. This implies that $L_{\B}(E)$ is the free $\B$-semimodule with basis $\{ p q^* \mid p, q \in E^* \}$.
	
Suppose that $L_{\B}(E)$ is isomorphic to $A_{\B}(\mcG_E)$, and let $\phi \colon L_{\B}(E) \longrightarrow A_{\B}(\mcG_E)$ be an isomorphism. Since~$v$ is the identity of $L_{\B}(E)$ and $1_{Z(v, v)}$ is the identity of $A_{\B}(\mcG_E)$ (note that $Z(v, v) = \smash{\mcG_E^{(0)}}$), it follows that $\phi(v) = 1_{Z(v, v)}$.

Let~$F$ be a nonempty finite subset of $E^1$. We then have $\phi(x) = 1_{Z(v, v, F)}$ for some nonzero element $x\in L_{\B}(E)$. Since $Z(v, v, F)$ is a proper subset of $Z(v, v)$, we get that $\phi(x) = 1_{Z(v, v, F)} \ne 1_{Z(v, v)} = \phi(v)$ and $\phi(v + x) = \phi(v) + \phi(x) = 1_{Z(v, v)} + 1_{Z(v, v, F)} = 1_{Z(v, v)}= \phi(v)$, hence $x \ne v$ and $v + x = v$.

From \cite[Prop.~2.4]{knz:solpawcias} it follows that~$x$ can be written in the form  $x = \sum_{i=1}^n p_i q_i^*$, where $n \ge 1$,  $p_i, q_i \in E^*$ and $p_k q_k^* \ne v$ for some $1 \le k \le n$. We then have $v = v + x = v + \sum_{i=1}^n p_i q_i^*$.
But since $L_{\B}(E)$ is the free $\B$-semimodule with basis $\{ p q^* \mid p, q \in E^* \}$, an equation of the type $v = v + \sum_{i=1}^n p_i q^*_i$ cannot hold in $L_{\B}(E)$. This shows that $L_{\B}(E)$ is not isomorphic to $A_{\B}(\mcG_E)$, finishing the proof.
\end{proof}

Combining Theorems~\ref{con-sim-gragrouSteinAlg} and \ref{LPAs are Steinberg-Alg} with \cite[Ex.~3.2]{cs:eghmesa}, we readily obtain a complete characterization of the congruence-simple Leavitt path algebras $L_S(E)$ of row-finite graphs over commutative semirings, which was established in \cite[Th.~4.5]{knz:solpawcias} by another approach.

\begin{cor}[{\textit{cf.}~\cite[Th.~4.5]{knz:solpawcias}}] 
Let~$E$ be a row-finite graph and~$S$ a commutative semiring. Then, $L_S(E)$ is congruence-simple if and only if the following three conditions are satisfied:
\begin{enumerate}[\quad \upshape (1)]
\item $S$~is either a field, or the Boolean semifield~$\B$;
\item The only hereditary and saturated subset of~$E^0$ are~$\varnothing$ and~$E^0$;
\item Every cycle in~$E$ has an exit.
\end{enumerate}
\end{cor}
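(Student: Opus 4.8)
The plan is to derive the congruence-simpleness of $L_S(E)$ from that of the Steinberg algebra $A_S(\mcG_E)$, for which Theorem~\ref{con-sim-gragrouSteinAlg} already gives precisely the three listed conditions. The link between the two algebras is the natural homomorphism $\pi_E \colon L_S(E) \longrightarrow A_S(\mcG_E)$, which is an isomorphism when~$S$ is a commutative unital ring by Clark and Sims's result \cite[Ex.~3.2]{cs:eghmesa} (in particular when~$S$ is a field), and also when $S = \B$ and~$E$ is row-finite by Theorem~\ref{LPAs are Steinberg-Alg}. Because this isomorphism is available only after one knows that~$S$ lies in the list of condition~(1), each direction of the proof will first fix the ground semiring and only afterwards transport congruence-simpleness across~$\pi_E$.

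For the implication ($\Longleftarrow$) I would assume (1)--(3). Condition~(1) places~$S$ among the fields and~$\B$, so in both cases $\pi_E$ is an isomorphism --- by \cite[Ex.~3.2]{cs:eghmesa} when~$S$ is a field, and by Theorem~\ref{LPAs are Steinberg-Alg} together with the row-finiteness of~$E$ when $S = \B$. Applying Theorem~\ref{con-sim-gragrouSteinAlg} with the same three conditions shows that $A_S(\mcG_E)$ is congruence-simple, hence so is the isomorphic algebra $L_S(E)$.

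For the implication ($\Longrightarrow$) I would assume $L_S(E)$ congruence-simple and first establish condition~(1) by imitating the proof of Proposition~\ref{Neccondprop}\,(1). If~$\sim$ is a congruence on~$S$ different from both trivial congruences, then $\overline S \defeq S/{\sim}$ is nonzero, and the universal homomorphism property of $L_S(E)$ (with $L_{\overline S}(E)$ regarded as an $S$-algebra via $S \to \overline S$) yields a reduction homomorphism $\Phi \colon L_S(E) \longrightarrow L_{\overline S}(E)$ sending each generator to its image and each scalar~$s$ to~$\overline s$. Since $L_S(E) \ne 0$ we may fix a vertex $v \in E^0$, and then~$\Phi$ is nonzero because $\Phi(v) = v \ne 0$; moreover, picking $a \ne b$ in~$S$ with $\overline a = \overline b$ gives $a v \ne b v$ in $L_S(E)$ but $\Phi(a v) = \Phi(b v)$, so~$\Phi$ is not injective. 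By Remark~\ref{Cong-simRem} this contradicts congruence-simpleness of $L_S(E)$, whence~$S$ has only the two trivial congruences and \cite[Th.~3.2]{mf:ccs} forces~$S$ to be a field or~$\B$. With~(1) secured, $\pi_E$ is once more an isomorphism, so $A_S(\mcG_E) \cong L_S(E)$ is congruence-simple and Theorem~\ref{con-sim-gragrouSteinAlg} returns conditions (2) and (3).

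The main obstacle is exactly this bootstrapping in the forward direction: one cannot invoke the isomorphism~$\pi_E$ until~$S$ is known to be a field or~$\B$, so condition~(1) has to be obtained by the independent congruence-reduction argument above rather than read off from $A_S(\mcG_E)$. The one technical point hidden inside it is the faithfulness of $s \mapsto s v$ from~$S$ into $L_S(E)$, i.e.\ that $a v \ne b v$ whenever $a \ne b$; I would verify this by applying~$\pi_E$ and observing that $a\,1_{Z(v,v)}$ and $b\,1_{Z(v,v)}$ take the constant values~$a$ and~$b$ on the nonempty set $Z(v,v) \subseteq \mcG_E^{(0)}$, so they agree only if $a = b$.
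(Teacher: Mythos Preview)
Your proposal is correct and follows exactly the route the paper indicates (``Combining Theorems~\ref{con-sim-gragrouSteinAlg} and \ref{LPAs are Steinberg-Alg} with \cite[Ex.~3.2]{cs:eghmesa}''): transport congruence-simpleness across the natural isomorphism~$\pi_E$ and read off the three conditions from Theorem~\ref{con-sim-gragrouSteinAlg}. The paper leaves the forward direction as ``readily obtained'', whereas you correctly spot and resolve the bootstrapping issue---that $\pi_E$ is only known to be an isomorphism once~$S$ is a field or~$\B$---by first deriving condition~(1) via the independent reduction argument modeled on Proposition~\ref{Neccondprop}\,(1); this extra care is warranted and the argument is sound.
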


We close this article with the following remark.

\begin{rem}\label{finalrem}
Theorem~\ref{con-sim-gragrouSteinAlg} provides us with nice examples of additively idempotent congruence-simple semirings by using graph groupoids, which may not isomorphic to the corresponding Leavitt path algebras. For example, let~$E$ be the graph as in Example~\ref{LPAs are not Steinberg-Alg}. It is obvious that~$E^0$ has the trivial hereditary and saturated subsets, and every cycle in~$E$ has an exit. Therefore, by Theorem~\ref{con-sim-gragrouSteinAlg}, $A_{\B}(\mcG_E)$ is an additively idempotent congruence-simple infinite semiring. We have not yet known whether $L_{\B}(E)$ is congruence-simple. However, in any case we obtain a congruence-simple semiring being non-isomorphic to $L_{\B}(E)$.
\end{rem}

\end{document}